\newtheorem{theorem}{Theorem}[section]
\newtheorem{lemma}[theorem]{Lemma}
\newtheorem{corollary}[theorem]{Corollary}
\newtheorem{definition}[theorem]{Definition}
\newenvironment{proof}[1][Proof]{\begin{trivlist}
\item[\hskip \labelsep {\bfseries #1}]}{\hfill$\Box$\end{trivlist}}
\newcommand{\bzero}{\mathbf{0}}
\newcommand{\Map}{F_n} 
\newcommand{\tW}{\widetilde{W}}
\begin{document}

\title[Creation of discontinuities in circle maps
]{Creation of discontinuities in circle maps}

\author{Gianne Derks$^1$, Paul A. Glendinning$^2$ \& Anne C. Skeldon$^1$}

\address{$^1$ Department of Mathematics, University of Surrey,
  Guildford GU2 7XH, UK \\
$^2$ School of Mathematics, University of Manchester, Oxford Road, Manchester M13 9PL, UK }

\ead{g.derks@surrey.ac.uk, paul.glendinning@manchester.ac.uk, a.skeldon@surrey.ac.uk}

\begin{abstract}
Circle maps frequently arise in mathematical models of physical or biological systems.
Motivated by Cherry flows and `threshold' systems
such as integrate and fire neuronal models, models of cardiac
arrhythmias, and models of sleep/wake regulation, we consider
how structural transitions in circle maps occur. In particular we describe how maps
evolve near the creation of a discontinuity.

We show that the natural way to create discontinuities in the
  maps associated with
both threshold systems and Cherry flows results in a square root
singularity in the map as the discontinuity is approached from either
one or both sides.  We analyse the generic properties of maps with
  gaps displaying
this local behaviour, showing how border collisions and saddle-node
bifurcations are interspersed. This highlights how the Arnold tongue
picture for tongues bordered by saddle-node bifurcations is amended
once gaps are present.

For the threshold models we also show that a loss of injectivity
naturally results in the creation of multiple gaps
giving rise to a novel codimension two bifurcation.

\end{abstract}

\ams{37E10 (maps of the circle) 34K18 (Bifurcation theory for difference equations), 37N25 (Dynamical systems in biology)}

\vspace{2pc}
\noindent{\it Keywords}: bifurcation, circle map, threshold model,
Cherry flow, discontinuous map, sleep-wake regulation, neuronal models

\maketitle


\section{Introduction}\label{section:intro}
Degree one circle maps $f:{\mathbb S}^1\to {\mathbb S}^1$ are
described by real functions $F:{\mathbb R}\to {\mathbb R}$ with
$F(x+1)=F(x)+1$ and $f(x)=F(x)$ modulo 1. These maps arise naturally
in many situations and $F$ may be injective (or not) and continuous
(or not) leading to four different types of map:  $f$ is injective
(monotonic) and continuous, 
the classic case; non-injective and continuous, 
\cite{Glass_1979};
injective and discontinuous, 
\cite{Glass_1991}; and
non-injective and discontinuous, 
\cite{Skeldon_2014}.

In many applications the type of map is fixed. However, for maps
derived from the classes of models considered below changes of type
occur naturally as parameters vary.  Close to the transitions between
types, the maps have a well-defined structure. This structure in turn
changes some dynamical properties of the systems (note that the
transition between type is not necessarily a bifurcation \emph{per
  se}). These transitions and their consequences are the subject of
this paper.

Our motivation comes from two classes of models. The first arises in
many biological models including models of cardiac arrhythmias (see
\cite{Arnold_1991} and \cite{Glass_1991} and the references therein),
neuronal models \cite{Bressloff_1990,Glendinning_1995}, and the two
process model of sleep-wake regulation \cite{Borbely_1982,Daan_1984}.
We refer to these examples as `threshold systems', since in each case
a variable of interest increases until it hits an upper threshold,
decreases until it hits a lower threshold and then repeats.  Some
typical examples are shown in Fig.\ref{fig:thresholdmodels}.  If the
thresholds are periodic with the same period, then each system can be
represented by a circle map \cite{Glass_1991, Nakao_1998,
  Skeldon_2014} and the resulting observed phenomena include
phase-locked solutions, `period-adding',  period-doubling and
chaos.

The second class of models has found application in problems of breathing
rhythms \cite{Baesens_2013a} and arises naturally in coupled oscillator
problems at appropriate parameter values \cite{Baesens_2013b}. The
initial model is a flow on the torus. If there are no stationary
points and a global cross section (a Poincar\'e flow), then the return
map on this section is a continuous monotonic circle map. If, as
parameters are varied, a pair of stationary points is created by
saddle-node bifurcations then the resulting flow is known as a Cherry
flow \cite{Cherry_1938}. These can generate return maps which have
either discontinuities or regions where the map is not defined
\cite{Palis_1982}.

\begin{figure}[tbhp]
\centering{\includegraphics[scale=1.2]{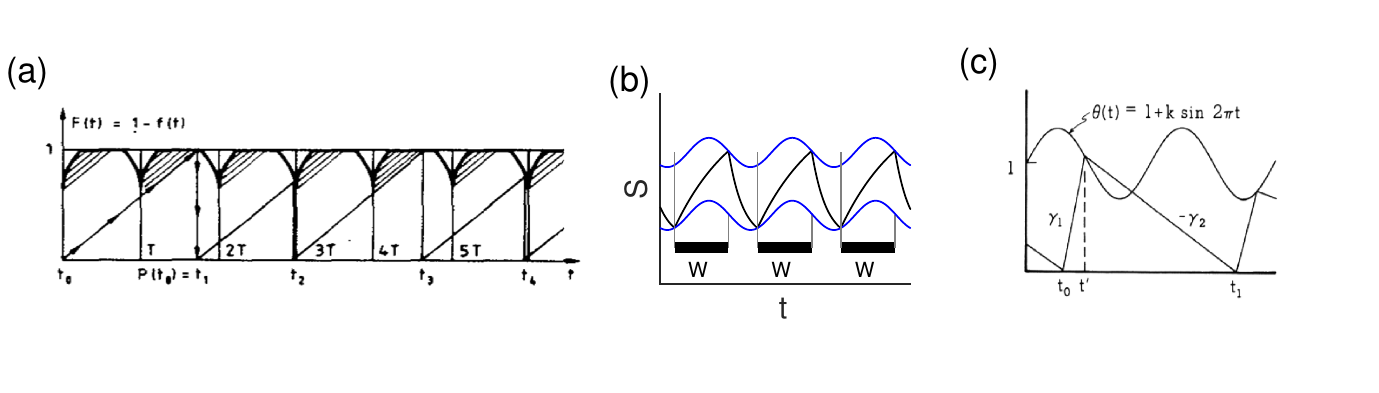}}
\caption{
(a) A model of cardiac arrhythmias,
attributed to Gel'fand and Tsetlin.
Reprinted from \cite{Arnold_1991} with the permission of AIP Publishing.
(b) The two-process model of sleep-wake regulation, sketch based
  on the model in \cite{Daan_1984}. (c)
An integrate and fire model.
Reprinted from \cite{Glass_1991} with the permission of AIP Publishing.
This model will be described in detail in
section~\ref{sect:STS} and be called the sinusoidal threshold system (STS).
}
\label{fig:thresholdmodels}
\end{figure}

There is a vast literature on  
circle maps which are both continuous and monotonic (e.g. \cite{Katok_1995}). All points have the same
rotation number (average rate of rotation) under iteration by such
maps. If the rotation number is rational then solutions tend to
periodic orbits. Whilst if the rotation number is irrational there are no periodic
solutions and, if the map is sufficiently smooth (e.g. $C^2$), all
orbits are dense in the circle. Deeper results about the smoothness of
conjugacies to rigid rotations for maps with irrational rotation
numbers were developed by Herman \cite{Herman_1979}, and led to many
technical results in this direction \cite{vStrien_1993}.  For typical
families of circle maps the rotation number takes rational values
  on closed intervals of parameters, 
this is called mode-locking and the regions of parameter space with a
given rational rotation number is an Arnold tongue. The Arnold tongues
are bounded by saddle-node bifurcations.

If the circle map is continuous but not monotonic 
then the rotation number is replaced by a rotation interval
\cite{Ito_1981}. Many properties can be understood using classic
results for maps of the interval and the transition
from 
continuous and monotonic circle maps to continuous and non-monotonic
maps has been described in some detail, including the transitions to
chaos which involve different sequences of period-doubling
bifurcations \cite{Boyland_1986, Mackay_1986,MacKay_1988}.

The injective and discontinuous circle maps 
arise in a number of contexts and basic results such as the existence of a well-defined rotation number can be found in \cite{Keener_1980,Rhodes_1986,Rhodes_1991}.  The
review paper \cite{Granados_2017} gives a thorough summary of the
current literature on these monotonic circle maps with gaps, i.e. intervals with no 
pre-images, and shows how
maps of the real line with gaps can be framed as circle maps. This is
particularly important because it highlights how many results known
from the study of circle maps have been rediscovered in the context of
maps of the real line.

The non-injective discontinuous 
circle maps can be divided into sub-classes. If the continuous
branches are increasing then this includes the Lorenz maps and again a
lot is understood, e.g. \cite{Glendinning_1993}. Less is known about
the details of the dynamics of 
non-injective discontinuous maps in general (although the techniques of kneading theory do apply), partly because it is less clear what results would be useful without further context.

Both threshold systems and the transition from Poincar\'e flows to Cherry flows
provide natural settings to consider the transition from continuous
circle maps to piecewise continuous circle maps with discontinuities.  In each
case, specific properties of the original dynamical system induce
transitions between different circle map types. For the transition to discontinuous maps in both of these cases, we show
that the derivative of the map is singular on at least one side
of the discontinuity and derive scaling results.

Although the derivative becomes singular, derivatives may be large for
a small neighbourhood of the singular value and so can be difficult to resolve numerically.
Nevertheless, we show that the presence
of the singularity is essential to understanding how a continuous
circle map with phase-locked regions bounded by saddle-node bifurcations
transitions to a circle map with a gap, with creation/destruction of periodic solutions via border collisions and saddle-node bifurcations.

The layout of the paper is as follows. In
section~\ref{sect:threshold} we define smooth threshold systems and
the associated circle maps. An extension of the standard Arnold map is
presented as an example. In section~\ref{sect:tangencies}, we discuss
the creation of gaps in smooth threshold systems, deriving the typical
scalings for the gap size, and showing that the map to one side of the
gap has a square root singularity. In section~\ref{sect:squareroot} we
consider a general form for a piecewise continuous map with a gap on
which one side the map has a square root singularity and discuss two
examples. In section~\ref{sect:nonmonotonic} we discuss the creation
of non-monotonicity in threshold systems and how this can result in
multiple gaps and lead to codimension two bifurcations.  In
section~\ref{sect:Cherryflow} we consider Cherry flows and discuss the
creation of a discontinuity, showing that a finite gap is created
instantaneously and that both sides of the gap have a square root
singularity.  We end with a short discussion.


\section{Threshold systems}\label{sect:threshold}
The essential feature of a threshold system is that there is a
dependent variable which increases until it hits an upper threshold,
decreases until it hits a lower threshold, and then repeats. The
following definition formalises this idea in the case of smooth
thresholds which provide the generic cases described later in this
paper. In this definition, $x$ represents the independent time-like
variable, and for any flow $\phi_x:{\mathbb R}\to{\mathbb R}$,
$\phi_x$ depends smoothly on the independent variable $x$, $\phi_0$ is
the identity and for all $r,s\in{\mathbb R}$,
$\phi_r\circ\phi_s=\phi_{r+s}$.

\begin{definition}\label{def:TS}
  A smooth threshold system is a pair of 
  flows $\phi_x$ and $\psi_x$, the up flow and down flow respectively,
  and an upper threshold and a lower threshold such that
\begin{enumerate}
  \item[(i)] The up flow is strictly increasing 
    and the down flow is strictly decreasing. 
\item[(ii)] The upper and lower thresholds are the graphs of smooth real functions $y=h(x)$ and $y=g(x)$ with period one respectively such that for all $x\in {\mathbb R}$
\begin{equation}
g(x) < h(x). \label{eq:threshold_ordering}
\end{equation}
\item[(iii)] Starting from the lower threshold, the up flow
    reaches the upper threshold in finite time and vice
    versa. Formally, if $y_0= g(x_0)$ then there exists $\tau >0$
  such that $\phi_{\tau}(y_0)=h(x_0+\tau )$, and if $y_0 =h(x_0)$ then
  there exists $\tau^\prime>0$ such that
  $\psi_{\tau^\prime}(y_0) =g(x_0+\tau^\prime )$.
\end{enumerate}
\end{definition}

To determine the dynamics of a threshold system consider an initial
condition on the upper threshold, $(x_n,y_n)$ with $y_n=h(x_n)$. By
property (iii) there is a smallest $\tau_n >0$ such that
$({x}_n+\tau_n,\tilde{y}_n)$ is on the lower threshold with
$\tilde{y}_n=\psi_{\tau_n} (y_n)=g(x_n+\tau_n)$. Now using the up flow
part of property (iii) there exists a smallest $\tau^\prime_n >0$ such
that
\[
y_{n+1}
:=\phi_{\tau^\prime_n}(\tilde{y}_n) = h(x_n+\tau_n+\tau^\prime_n).
\]
Thus starting at $x_n$ on the upper boundary, the trajectory returns
to the upper boundary at time $x_n+\tau_n +\tau_n^\prime$, generating a
map
\[
x_{n+1}=F(x_n)=x_n+\tau_n+\tau^\prime_n.
\]
If the trajectory had started at the upper boundary with $x$-coordinate $x_n+1$ then the periodicity of $g$ and $h$ implies that the return to the upper boundary would be at $x_n+1+\tau_n+\tau^\prime_n$ and so $F(x+1)=F(x)$ thus $F$ is the lift of a degree one circle map.

In sections~\ref{sect:tangencies} and \ref{sect:nonmonotonic} {we
  will show that monotonicity and continuity of the circle map of a smooth
  threshold system is related to the absence of tangencies between the
  thresholds and 
  flows.}
%
%
%
Throughout this paper we illustrate our findings with the standard
example of a sinusoidal threshold, sketched in
Fig.~\ref{fig:thresholdmodels}(c) and described below. The section
below also contains the Arnold tongue structure for this example.

\subsection{Example: the Sinusoidal Threshold System
  (STS)}\label{sect:STS}
We believe that this dynamical system first appeared {explicitly} in a
paper by Glass {and Belair in 1986~\cite{Glass_1986}}.  {In a 1991
  paper~\cite{Glass_1991},} Glass refers to it as the Gel'fand and
Tsetlin integrate and fire model, though he was unable to locate a
reference. He notes that it is also studied by Arnold in his {1959}
  thesis, an excerpt of which is in~\cite{Arnold_1991}, though no
  explicit model is given.
  Although the STS appears in \cite{Glass_1991}, the
  dynamics were not fully analysed. In ~\cite{Glass_1986}, three
  special cases are considered: infinite slope for the down flow
  (reset dynamics), equal rates for the up and down flow, and infinite
  slope for the up flow.
  The first and last case seem to investigated in detail
  in various papers, but middle one is referred to as ``hope to
  investigate later''. The case with the infinite slope in the down
  flow is considered by Winfree in~\cite{Winfree_1980} in the context of
  the  entrainment of circadian rhythms. 
The STS can also be thought of as a
simplified form of the two process model of sleep-wake regulation~\cite{Borbely_1982,Daan_1984}.

For the STS,  the upper and lower thresholds are given by the functions
\begin{eqnarray}
y  & =  & h(x) = \beta  + \frac{\alpha}{2 \pi} \left ( 1 + \sin 2 \pi x \right ), \nonumber \\
y  & = & g(x) = 0, \label{eq:toymap1}
\end{eqnarray}
respectively, with $\alpha >0$ and $\beta >0$.
The up and down flows are linear functions as shown
in Fig.~\ref{fig:thresholdmodels}(c) 
$\phi_{x}(y_0)=y_0+\gamma \,x$, $\gamma \ge 0$, (i.e. the solution to $\frac{dy}{dx}=\gamma$) and $\psi_{x}(y_0)=y_0-x$ (the solution to $\frac{dy}{dx}=-1$).
This system is equivalent to the system in~\cite{Glass_1991} with
  a rescaling of the parameters.


Suppose that at $x_n$ the system is on the upper threshold, with $y_n=h(x_n)$.  Then the trajectory of the down flow
will reach the lower threshold after additional time $\tau_n$, where $\tau_n=h(x_n)$, and the new value of the independent variable is $\tilde{x}_n=x_n+\tau_n$ with $y$-coordinate equal to zero. The time taken to return to the upper threshold using the up flow is $\tau_n^\prime$ which is determined implicitly by
\begin{equation}
\gamma \tau^\prime_n= h(x_n+\tau_n+\tau^\prime_n).  \label{eq:toymap3}
\end{equation}
If (\ref{eq:toymap3}) has multiple positive solutions then the smallest possible $\tau^\prime$ is chosen. The return map to the upper threshold is therefore $x_{n+1}=F(x_n)=x_n+\tau_n+\tau^\prime_n$ or (as an implicit difference equation)
\begin{equation}
x_{n+1}=x_n+h(x_n)+\frac{1}{\gamma}h(x_{n+1}). \label{eq:implicitF}
\end{equation}
An immediate consequence is that as $\gamma\to \infty$ (a classic
reset) this reduces to the classic Arnold (sine) circle map
\cite{Arnold_1991,Glass_1991} which has been extensively studied.

The gradient of the map determines properties such as
monotonicity. Implicit differentiation of (\ref{eq:implicitF}) gives
\begin{equation}
  \frac{d x_{n+1}}{d x_n} =
   \frac{1 + h'(x_n)}{1 - \frac{1}{\gamma}h'(x_{n+1})} .
\label{eq:mapgradient}
\end{equation}
Since $h^\prime (x)= \alpha\cos (2\pi x)$, the numerator of
(\ref{eq:mapgradient}) is always positive if $\alpha <1$ (recall
$\alpha$ is positive) and the denominator is positive 
provided $\alpha <\gamma$.  In particular, the map is monotonic and
continuous if $\alpha < \min (1,\gamma)$.  The derivative becomes
singular when the up flow becomes tangent to the upper threshold.  In
section~\ref{sect:tangencies} we will show that this a generic feature
in maps generated by threshold systems and describe the generic
development of a discontinuity and singular derivative in the map.
Similarly,~\eref{eq:mapgradient} shows that the derivative vanishes
when the down flow becomes tangent to the upper boundary.  In
section~\ref{sect:nonmonotonic} we will show that such a tangency
generates non-monotonicity in the map generated by a threshold
system. Thus the STS example illustrates
the generic transition from monotonicity to non-monotonicity when $\alpha=1$ and
the generic transition from continuous to non-continuous when
$\alpha=\gamma$.


One attractive feature of the STS is that explicit expressions for
some of the bifurcations can be found.  A periodic solution on the
circle corresponds to a solution of the form $F^q(x)=x+p$ and has
rotation number $p/q$.
We will refer to such solutions as $(p,q)$-periodic orbits, they
satisfy
\begin{equation}
x_{n+q} = x_{n} + p.
\label{eq:STMpqsolution}
\end{equation}
In the STS, a necessary condition for the existence of $(p,1)$-periodic solutions is that
there exist $x$ such that
\begin{equation}\label{eq:p1crit}
\sin 2 \pi x = \frac{2 \pi}{\alpha} \left ( p \tilde \gamma  - \beta \right )
-1, \quad\mbox{with} \quad\tilde \gamma = \frac{\gamma}{1+\gamma}.
\end{equation}
\begin{figure}
\centering{\includegraphics[scale=1.0]{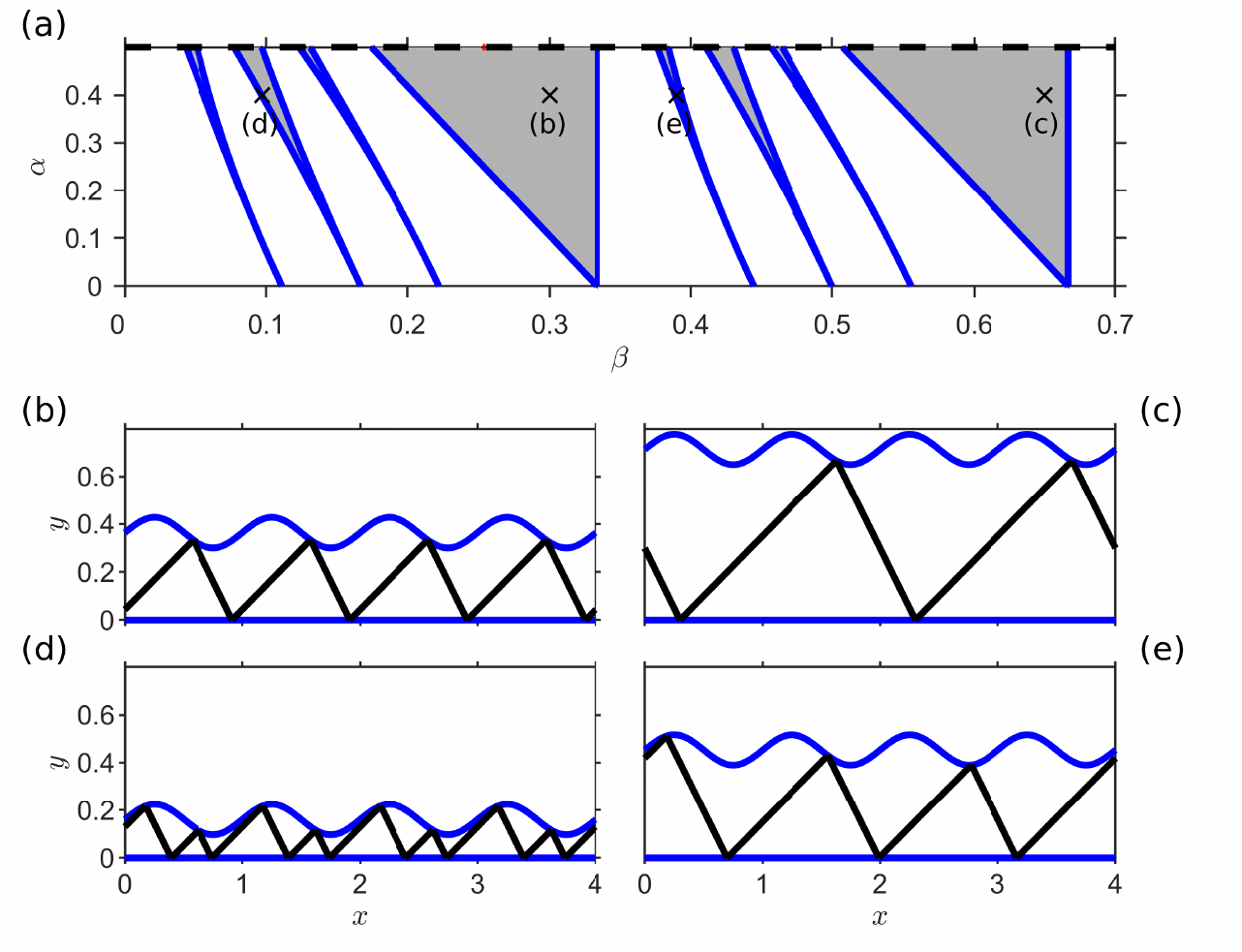}}

\caption{(a) Bifurcation set showing the largest few tongues that bound the
regions of existence for periodic solutions ($\gamma=0.5$). The blue lines are
lines of saddle-node bifurcations. (b)-(e) Trajectories for periodic
solutions with $(p,q)=(1,1),
(2,1), (1,2)$ and $(4,3)$ respectively ($\alpha=0.4, \gamma=0.5$ and
$\beta=0.3, 0.65, 0.097$ and $0.39$ respectively).
}
\label{fig:tongues_saddlenodes}
\end{figure}
As we will see in section~\ref{sect:tangencies}, this condition is not sufficient if the map has
a discontinuity. (This is related to the fact that a threshold system
is defined on the first intersection of the up/down flow with the
upper/lower threshold.) Solutions to~\ref{eq:p1crit} exist provided
its right hand side has modulus less than or equal to one. Thus for
fixed $\gamma$, the maximal region of existence of $(p,1)$-periodic
solutions in the $(\beta ,\alpha)$ parameter plane with $\alpha >0$ is
bounded by the curves
$$
\alpha  = \pi \left ( p \tilde \gamma - \beta \right ),
\quad\mbox{and}\quad
\beta=p\tilde \gamma,
$$
on which $x$ takes the values $1/4$ and $3/4$ respectively. If these
$x$ values correspond to first intersections of the up flow with the
upper threshold (as is the case when the map is continuous, i.e., for
$\alpha<\gamma)$, 
these curves are lines of saddle-node bifurcations that create one
stable and one unstable periodic solution. They form tongue-like
regions emanating from $\alpha=0$, $\beta=p\tilde \gamma$.

Saddle-node bifurcations for general $(p,q)$ tongues can be found by
numerically solving equation~(\ref{eq:STMpqsolution}) along with the
condition that the gradient of the $q^{\rm th}$ iterate of the map is
one.  Typical Arnold tongues for the STS with $\alpha<\gamma$ are
shown in Fig.~\ref{fig:tongues_saddlenodes}.
We note that there is a parameter symmetry for existence regions
for periodic solutions.  Specifically, if for some $\alpha, \beta, \gamma$
there exists a $(p,q)$-periodic orbit,
where $p,q \in \mathbb Z^+$, and $p$ and $q$ are relatively prime,
then there also exists a $(\tilde p,q)$-periodic orbit, where $\tilde p = p+mq$,
$m \in \mathbb Z^+$ for $\alpha, \gamma$ and
$$
\tilde \beta = \beta + \frac{m \gamma}{1+\gamma} =\beta + m\tilde\gamma .
$$
This symmetry is then reflected in the positions of the tongues: in
Fig.~\ref{fig:tongues_saddlenodes},
the $(2,1)$-tongue is a translation of the $(1,1)$-tongue,
the $(3,2)$-tongue is a translation of the $(1,2)$-tongue,
the $(4,3)$-tongue is a translation of the $(1,3)$-tongue and
the $(5,3)$-tongue is a translation of the $(2,3)$-tongue.

At $\alpha=\gamma$, the up flow becomes tangent to the upper
  threshold and the map looses smoothness at the pre-image of this
  tangency point (see~\eref{eq:mapgradient}). As we will show in the
  next section, the map develops a discontinuity for
  $\alpha>\gamma$. We will continue with this example after deriving
  the general theory.



\section{Tangencies leading to gaps}\label{sect:tangencies}

As shown by Arnold~\cite{Arnold_1991}, gaps in a threshold map are a
result of `shadow' regions in the dynamics, that is, regions for which
the upper threshold is unreachable, as illustrated in
Fig.~\ref{fig:gaps} for the STS. In this example, there are regions of
the upper threshold such that every trajectory from the lower threshold
that intersects this region must already have crossed the upper
threshold at least once. For sufficiently smooth flows and thresholds,
the generic transition from no gaps to gaps will occur as a result of
either tangency of the up flow with the upper threshold or the down
flow with the lower threshold.
%

\begin{figure}
\begin{center}
\includegraphics[scale=1.0]{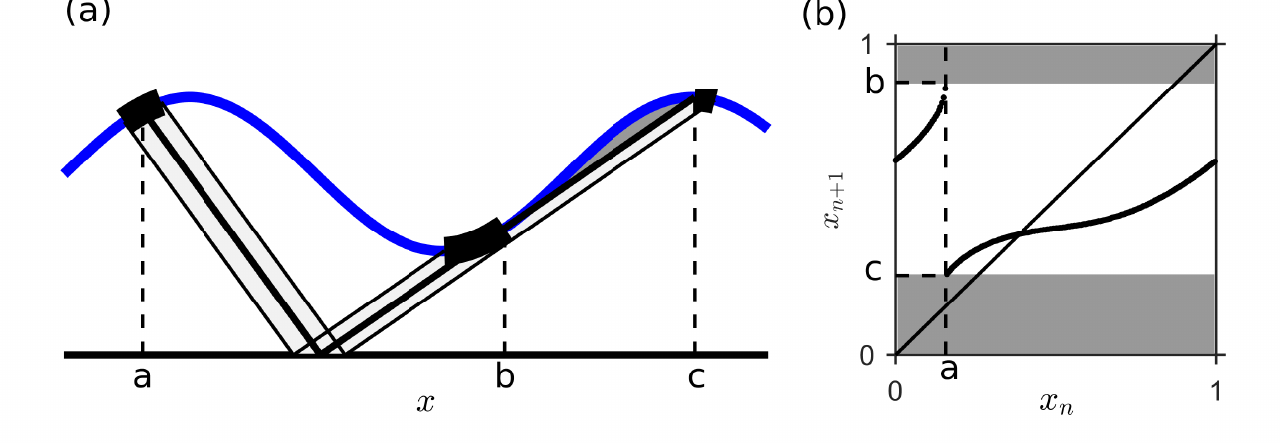}
\end{center}
\caption{(a) A tangency of the up-flow with the upper threshold for
  the STS ($\alpha=0.7, \beta=0.15, \gamma=0.5$). The tangency occurs at
  the point $x=b$ which has pre-image at $x=a$. The figure illustrates
  how the region local to $a$ maps to two disjoint sets, one local to
  $b$ and one local to $c$, where $x=c$ is the position of the second
  intersection.  The shadow region is shaded in dark grey and
  corresponds to $x \in (b,c]$. (b) Corresponding circle map.  }
\label{fig:gaps}
\end{figure}


\newcommand{\fun}{\mathcal{T}}
In this section we will look at parameterised families of threshold
systems. We determine generic criteria for the creation of a
discontinuity and describe the local behaviour nearby. The
construction of the return map involves solving for the zeroes of a
function which can be treated in almost precisely the same way as the
standard bifurcation theory for fixed points (e.g. \cite{Kuz_1995},
chapters 4 and 8). Let $\mathbb{P}\subset\mathbb R$ be the parameter
space. Parameterised threshold maps can be thought of as the
composition of two maps: the down map
$T_d:\mathbb R\times \mathbb P\to \mathbb R$ from the upper boundary to the
lower boundary, and the up map $T_u:\mathbb R \times \mathbb P\to \mathbb R$
from the lower boundary to the upper boundary as described in
section~\ref{sect:threshold} but with the addition of a real
parameter, so $T_{u,d}=T_{u,d}(x,\mu)$. The periodicity of the
thresholds implies that both of these maps have period one in the
first variable: $T_{u,d}(x+1,\mu ) = T_{u,d}(x,\mu)$ for all
$x\in \mathbb R$.

Assume that the down map is a smooth bijection of the real line for all $\mu$ in the region of interest. Thus the down trajectory from $(x_n,h(x_n,\mu ))$ will intersect the lower threshold at $(x, g(x,\mu ))$ with $x=T_d(x_n,\mu )$ and for every $x\in \mathbb R$ such an $x_n$ exists.

The trajectory under the up flow $\phi$ starting at $(x,g(x, \mu ))$
on the lower threshold will intersect the upper threshold
$y=h(x, \mu)$, at time(s) $\tau$ which satisfy
\begin{equation}\label{eq:basic0}
W(\tau ,x,\mu ):=\phi_\tau (g(x,\mu), \mu )-h(x+\tau,\mu) =0.
\end{equation}
We will be interested in local behaviour near a solution
$(\tau^*,x^*,\mu^*)$ of (\ref{eq:basic0}) representing a first
intersection of the up flow with the upper boundary.
By shifting coordinates we may choose
$(x^*,\mu^*) = (0,0)$ and from now on we assume this shift of
coordinates has been implemented.
The essential genericity
condition on the $x$ and $\mu$ behaviour, assumed throughout this section, is that
\begin{equation}\label{eq:bgen}
W_2(\tau^*,0,0)\ne 0 \quad \textrm{and} \quad W_3(\tau^*,0,0)\ne 0.
\end{equation}
Here we used subscripts to denote
partial differentiation by the $i^{th}$ variable, e.g., $W_2 =
\frac{\partial}{\partial x} W$.

%
\begin{figure}
\begin{center}
\includegraphics[scale=0.9]{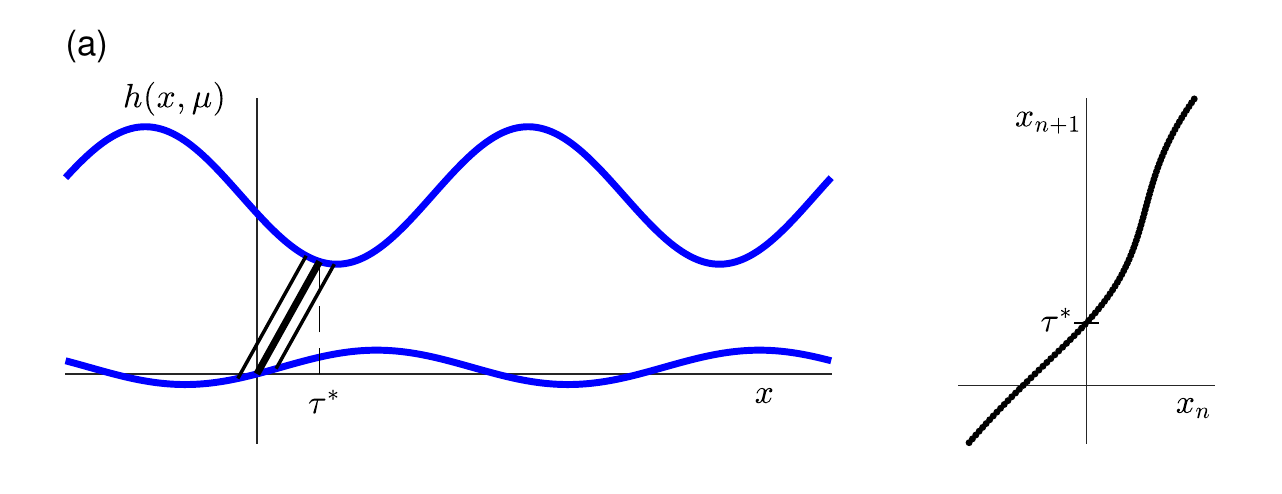}

\includegraphics[scale=0.9]{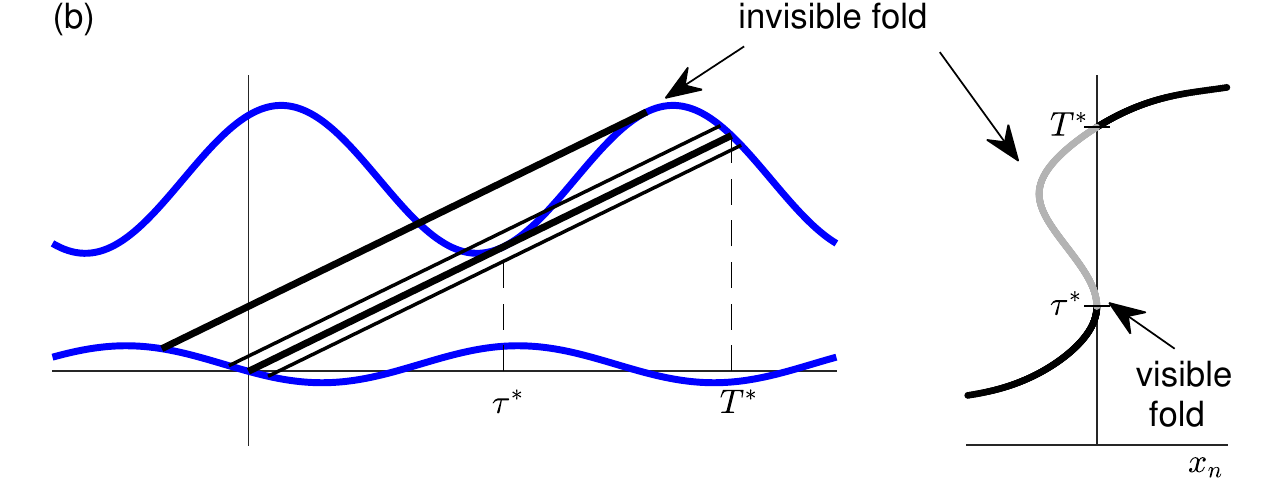}

\includegraphics[scale=0.9]{Figure4b.pdf}
\end{center}
\caption{Tangencies leading to gaps.
(a) Unique solution to $\tau^*$. (b) Existence of
a simple tangency between the up flow and the
upper boundary. (c) The cusp catastrophe.}
\label{fig:gapcreation}
\end{figure}

The map is locally continuous if in addition
$W_1(\tau^*,0,0)\ne 0$. Indeed, a standard application of the
Implicit Function Theorem yields a unique and smooth local solution
$\tau (x,\mu)$ to~\ref{eq:basic0} the form
\[
  \tau =\tau^*-x\,\frac{W_2}{W_1}+\mu\,\frac{W_3 }{W_1}+ O(2)
\]
where the partial derivatives are evaluated at
$(\tau^*,0,0)$ (see Fig.~\ref{fig:gapcreation}(a)).

There is a simple tangency between the upwards trajectory and the upper boundary if
\begin{equation}\label{eq:tan}
W(\tau^*,0,0)=0, \quad W_1(\tau^*,0,0)=0, \quad W_{11}(\tau^*,0,0)\ne 0.
\end{equation}
Although this is not
the \emph{first}
tangency it is worth considering as it shows the persistence of jumps
in the one-dimensional map.
In this case, the intersection equation~(\ref{eq:basic0}) can be written locally as
\begin{equation}\label{eq:x2}
(\tau -\tau^*)^2=-\frac{2}{W_{11}}\left( W_2x+W_3\mu \right) + O(2).
\end{equation}
Hence there is a fold along a curve in the $(x,\mu )$-plane given by
$W_2x+W_3\mu =0$ in lowest order. The fold has two interpretations: it
represents a unique local solution to~\eref{eq:basic0} (a fold point)
$\tau(\mu) = \tau^* + O(\mu)$. In the threshold system it gives also a
persisting simple tangency between the upwards trajectory and the
upper threshold at this point. When $W_{11}(W_2x+W_3\mu)< 0$
then~\eref{eq:basic0} has locally two solutions, one less and one
greater than $\tau^*$. The map is defined using the negative solution
to (\ref{eq:x2}) as this represents the first intersection of the up
flow and the upper threshold. Locally the second solution does not play
a role in defining the map.  Furthermore, the derivative
$\frac{\partial \tau}{\partial x}=\frac{W_2}{W_{11}(\tau^*-\tau)}$ in
lowest order, hence for $(x,\mu)$, with $W_{11}(W_2x+W_3\mu)<0$,
approaching the fold curve, the derivative of the map becomes
unbounded at $\tau^*$ and exhibits a square root singularity
(see Fig.~\ref{fig:gapcreation}(b)).

By Definition~\ref{def:TS} 
there are solutions to $W(\tau ,x,\mu )=0$ even if
$W_{11}(W_2x+W_3\mu)>0$.
Generically this implies that~\eref{eq:basic0} has a second
(typically non-local) solution $(T^*,0,0)$, $T^*>\tau^*$, representing
the second intersection between the up flow and the upper threshold
(after the first one at $(\tau^*,0,0)$). Generically all the first
derivatives at $(T^*,0,0)$ are non-zero and the map can be continued
for $W_{11}(W_2x+W_3\mu)>0$, with a discontinuity and finite
derivative on approaching the fold curve
$W_2x+W_3\mu=O(2)$ from this side.
%
This proves that once the jump exists, it is stable to perturbations
of the map.
This analysis also implies that there is a second fold between
$\tau^*$ and $T^*$ for some fold curve $(x,\mu)$ with
$W_{11}(W_2x+W_3\mu)<0$. This fold is not part of the return map and
at the risk of confusing with the standard names in piecewise smooth
dynamics (of which this is a natural counterpart) we refer to this
second fold as an invisible fold and to the fold at $\tau^*$ as a
\emph{visible} fold,  
see Fig.~\ref{fig:gapcreation}(b).

So at a standard fold satisfying (\ref{eq:tan}) the discontinuity
already exists ($T^*$ of the previous paragraphs is non-local). To
obtain the transition from continuity to discontinuity we need a
further condition:
\begin{equation}\label{eq:tantan}
W(\tau^*,0,0)=W_1(\tau^*,0,0)=W_{11}(\tau^*,0,0)= 0, \quad W_{111}(\tau^*,0,0)\ne 0,
\end{equation}
see Fig.~\ref{fig:gapcreation}(c).
Taken together with the genericity condition (\ref{eq:bgen}) this
defines a standard unfolding of the cubic singularity: a cusp
catastrophe. The standard form for the unfolding of the cusp is
\begin{equation}\label{eq:cuspa}
A +B u+u^3=0, \quad u=\tau -\tau^*,
\end{equation}
 which has three solutions if $A^2<\frac{4}{27}|B|^3$, $B <0$ and
 otherwise one solution except on the curves $A^2=\frac{4}{27}|B|^3$,
 $B <0$, which are the loci of local folds. Using the Taylor series
 for $W$, the intersection equation (\ref{eq:basic0}) becomes
\[\begin{array}{rl}
0= & (W_2x+W_3\mu + O((|x|+|\mu |)^2) )+(W_{12}x+W_{13}\mu +O((|x|+|\mu |)^2 )u\\
& +\frac{1}{2}(W_{112}x+W_{113} \mu +O((|x|+|\mu |)^2 )u^2+\frac{1}{6}(W_{111}+O(|x|+|\mu | )u^3+O(u^4).
\end{array}
\]
The transformation $u\to u-\frac{1}{W_{111}}( W_{112}x+W_{113} \mu)$ transforms this to
\begin{equation}\label{eq:cuspb}
0=a(x,\mu )+b(x,\mu )u+u^3+O(u^4),
\end{equation}
where
\begin{equation}\label{eq:axmu}
    a(x,\mu )=\frac{6}{W_{111}}\left(W_2x+W_3\mu\right) +O((|x|+|\mu |)^2)
\end{equation}
and
\begin{equation}\label{eq:bxmu}
  b(x,\mu )=\frac{6}{W_{111}}\left(W_{12}x+W_{13}\mu\right) +O((|x|+|\mu |)^2).
\end{equation}
Note that $a(0,0)=b(0,0)=0$. Provided the transformation from $(a,b)$ to $(x,\mu )$ is non-singular, i.e. provided
\begin{equation}\label{eq:Jac0}
\det\left(\frac{\partial (a,b)}{\partial (x,\mu)}\right)\Big|_{(0,0)}\ne 0,
\end{equation}
all the standard results for the unfolding of the $x^3$ singularity in the $(a,b)$-plane carry over to the $(x,\mu )$-plane. A glance at the standard cusp manifold shows that one of the branches of the folds defined by the cusp is invisible and the other is visible, creating a bifurcation curve of jumps in the $(x,\mu )$-plane terminating at $(0,0)$.

\begin{lemma}\label{thm:cusp}Let $a(x,\mu )$, $b(x,\mu )$ be as
  defined in~\eref{eq:axmu} and~\eref{eq:bxmu}. Suppose that the
  nondegeneracy condition (\ref{eq:bgen}) and the double tangency
  condition (\ref{eq:tantan}) hold with
\begin{equation}\label{eq:ngen}
  W_2W_{13}-W_3 W_{12}\ne 0.
\end{equation}
Then for $\sigma\in\{+1,-1\}$ the locus $a=\sigma
\frac{2}{3\sqrt{3}}|b|^{\frac{3}{2}}$, $b<0$, gives a fold at
$u = \sigma \sqrt\frac{|b|}{3}$.
\end{lemma}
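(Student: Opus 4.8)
The plan is to reduce the statement to the elementary fold analysis of the truncated cubic $u^{3}+bu+a$ and then to check that the higher-order terms in \eref{eq:cuspb}--\eref{eq:bxmu} do not disturb the leading-order description of the fold locus.

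First I would record that the extra hypothesis \eref{eq:ngen} is precisely condition \eref{eq:Jac0}: differentiating \eref{eq:axmu} and \eref{eq:bxmu} at the origin and using $W_{111}\neq0$ from \eref{eq:tantan},
\[
\det\left(\frac{\partial (a,b)}{\partial (x,\mu)}\right)\Big|_{(0,0)}=\left(\frac{6}{W_{111}}\right)^{2}(W_{2}W_{13}-W_{3}W_{12})\neq 0 ,
\]
so by the Inverse Function Theorem $(a,b)$ is a smooth local coordinate system near $(x,\mu)=(0,0)$ and it suffices to locate the fold set in the $(a,b)$-plane. Write $G(u,a,b)=a+bu+u^{3}+\mathcal{R}(u,a,b)$ for the left-hand side of \eref{eq:cuspb}, with $\mathcal{R}$ smooth and of higher order at the cusp point (cf. the $O(u^{4})$ there). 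A fold is, by the convention used earlier in this section, a solution of $G=0$ and $G_{u}=0$ with $G_{uu}\neq0$; since $G_{uu}=6u+O(u^{2})$, any such point with $0<|u|$ small is automatically non-degenerate in this sense.

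Next I would parametrise the fold set by $u$. For the truncated normal form $\mathcal{R}\equiv0$ this is pure algebra: $G_{u}=3u^{2}+b=0$ forces $b<0$ and $u=\sigma\sqrt{|b|/3}$ with $\sigma=\mathrm{sign}\,u\in\{+1,-1\}$, and then $G=0$ gives $a=-bu-u^{3}=\frac{2}{3}|b|\,u=\sigma\frac{2}{3\sqrt3}|b|^{3/2}$, which is exactly the claimed locus, with the fold occurring at $u=\sigma\sqrt{|b|/3}$. For the full equation I would instead solve $G_{u}=b+3u^{2}+O(u^{3})=0$ for $b$ by the Implicit Function Theorem (the $b$-derivative of $G_{u}$ is $1+O(u)$), getting a smooth branch $b=b(u)=-3u^{2}+O(u^{3})<0$; substituting into $G=0$ gives $a=a(u)=-b(u)u-u^{3}+O(u^{4})=2u^{3}+O(u^{4})$. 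Inverting $b(u)$ on each sign branch yields $u=\sigma\sqrt{|b|/3}\,(1+O(\sqrt{|b|}))$ and hence $a=\sigma\frac{2}{3\sqrt3}|b|^{3/2}(1+o(1))$. Thus the fold set is a smooth curve through the origin with a cusp there, whose two branches $\sigma=\pm1$ are, to leading order, the stated half-cubics; as noted just before the lemma, one branch is the visible fold and the other the invisible one.

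There is no deep obstacle: the lemma is the classical cusp computation, and the only genuine work is the bookkeeping in the last step. One has to observe that along the fold curve $|u|\sim\sqrt{|b|}$, so that the neglected terms (of size $O(u^{4})$, together with the $O((|x|+|\mu|)^{2})$ corrections carried through the coordinate change) are $O(|b|^{2})\ll|b|^{3/2}$ and therefore perturb both $a=\sigma\frac{2}{3\sqrt3}|b|^{3/2}$ and $u=\sigma\sqrt{|b|/3}$ only at higher order. This is precisely the sense in which, as the surrounding discussion states, all the standard results for the unfolding of the $x^{3}$ singularity carry over from the $(a,b)$-plane to the $(x,\mu)$-plane.
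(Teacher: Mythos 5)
Your proposal is correct and follows essentially the same route as the paper: the paper's proof simply observes that \eref{eq:ngen} is a rewriting of \eref{eq:Jac0} and then cites the standard unfolding of the $\pm u^3$ singularity, which is exactly the fold computation ($G=0$, $G_u=0$ for $G=a+bu+u^3$) you carry out. Your explicit determinant calculation and the implicit-function-theorem control of the $O(u^4)$ and $O((|x|+|\mu|)^2)$ remainders merely fill in the details the paper leaves to the standard cusp theory.
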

\begin{proof}
  Equation (\ref{eq:ngen}) is just a rewriting of (\ref{eq:Jac0}). The
  remainder is a restatement of properties of the standard
  unfolding of the singularity $\pm u^3$.
\end{proof}

Lemma~\ref{thm:cusp} is a re-interpretation of the cusp bifurcation
(\cite[section 8.2]{Kuz_1995}). The up map $T_u$ is
\[
x \to x+u_m+\tau^*-\frac{W_{112}x+W_{113} \mu}{W_{111}}
\]
where $u=u_m$ is the smallest $u$ value satisfying (\ref{eq:cuspb}),
hence the visible fold corresponds to $\sigma=-1$ in
Lemma~\ref{thm:cusp}.  The lemma determines the locus of the
discontinuity but not the effect.

\begin{corollary}\label{cor:jumps}Suppose that (\ref{eq:bgen}),
  (\ref{eq:tantan}) and~\eref{eq:ngen} hold.
  Then the corresponding up map $T_u$ has a discontinuity of size of
  order $\sqrt{|\mu |}$ on the visible fold line.
\end{corollary}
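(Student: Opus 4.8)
The plan is to track the two roots of the cusp equation~\eref{eq:cuspb} that collide at the visible fold, and to show that the corresponding up-map values differ by a quantity of order $\sqrt{|\mu|}$ along that fold line. First I would recall from Lemma~\ref{thm:cusp} that on the visible fold ($\sigma=-1$) the cubic $a+bu+u^3=0$ has a double root at $u_- = -\sqrt{|b|/3}$ (and a simple root $u_+ = 2\sqrt{|b|/3}$), so the three solutions of~\eref{eq:cuspb} are, to leading order, $u_m = u_- - \sqrt{|b|/3}\cdot(\text{small})$ coalescing with $u_-$, and the separate root near $u_+$. The up map selects the smallest root $u=u_m$, which near the fold curve sits at $u_m = -\sqrt{|b|/3} + O(\cdot)$, while the \emph{non-selected} second intersection (the root near $u_+$) lies at $u = 2\sqrt{|b|/3}+O(\cdot)$. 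Since the discontinuity in $T_u$ as one crosses the visible fold line is exactly the jump between the branch using the coalescing pair and the branch that continues past the cusp, its size is controlled by the separation $|u_+ - u_-| = 3\sqrt{|b|/3} = \sqrt{3}\,\sqrt{|b|}$ evaluated on the fold.

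Next I would express $|b|$ on the fold line in terms of $\mu$. By~\eref{eq:bxmu}, $b(x,\mu)$ is a linear form in $(x,\mu)$ to leading order with coefficients $\tfrac{6}{W_{111}}W_{12}$ and $\tfrac{6}{W_{111}}W_{13}$; by~\eref{eq:axmu}, $a(x,\mu)$ is likewise linear with coefficients built from $W_2,W_3$. The nondegeneracy condition~\eref{eq:ngen}, $W_2W_{13}-W_3W_{12}\ne 0$, is precisely what guarantees (via~\eref{eq:Jac0}) that $(a,b)\mapsto(x,\mu)$ is a local diffeomorphism, so the fold locus $a = -\tfrac{2}{3\sqrt3}|b|^{3/2}$, $b<0$, pulls back to a curve through the origin in the $(x,\mu)$-plane that is tangent to the line $a(x,\mu)=0$, i.e. tangent to $W_2 x + W_3 \mu = 0$. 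Parametrising this fold curve by $\mu$ and substituting into $b(x,\mu)$, one gets $|b| = c\,|\mu| + o(|\mu|)$ for a nonzero constant $c$ (nonzero exactly because the fold curve is not contained in $\{b=0\}$, which again follows from~\eref{eq:ngen}). Hence the jump size is $\sqrt{3}\sqrt{|b|} = \sqrt{3c}\,\sqrt{|\mu|} + o(\sqrt{|\mu|})$, which is of order $\sqrt{|\mu|}$.

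Finally I would convert the statement about $u$-values into one about $T_u$-values using the explicit formula $x\mapsto x + u_m + \tau^* - \tfrac{W_{112}x + W_{113}\mu}{W_{111}}$ recorded just before the corollary: the linear correction term and the shift $\tau^*$ are continuous across the fold, so the entire discontinuity of $T_u$ comes from the jump in $u_m$, which is $O(\sqrt{|\mu|})$ as computed. The main obstacle I anticipate is bookkeeping at the coalescence: one must be careful that the up map before the bifurcation is genuinely using the pair of roots that merge (the \emph{visible} fold, $\sigma=-1$) and that after the bifurcation the relevant root is the one that persisted on the far side — i.e. that the "smallest root" prescription does indeed hand off from the coalescing branch to the surviving branch exactly on the fold line, with no spurious intermediate root of~\eref{eq:cuspb} interfering. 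This is where the distinction between the visible fold at $\tau^*$ and the invisible fold between $\tau^*$ and $T^*$ (discussed around Fig.~\ref{fig:gapcreation}(b)) must be invoked to rule out the invisible branch from the definition of $T_u$, after which the $O(\sqrt{|\mu|})$ estimate is routine.
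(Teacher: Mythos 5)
Your argument is correct and follows essentially the same route as the paper's proof: you identify the jump as the gap between the double root $-\sqrt{|b|/3}$ and the simple root $2\sqrt{|b|/3}$ of the cubic on the visible fold, then evaluate $b$ along the fold curve (tangent to $a=0$, i.e. $x\sim -W_3\mu/W_2$), with the nondegeneracy condition~\eref{eq:ngen} guaranteeing a nonzero coefficient of $\mu$, giving a jump of order $\sqrt{|\mu|}$. The root-selection issue you flag is handled exactly as you suggest, via the smallest-root prescription and the visible fold $\sigma=-1$ identified in Lemma~\ref{thm:cusp}.
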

\begin{proof}
The size of the discontinuity is the change in $\tau$ values from the
value on the visible fold to the other value of the cubic
(\ref{eq:cuspb}). On the visible fold  $u^2=-\frac{1}{3}b>0$ to lowest
order, and if we choose the negative solution,
$u=-\sqrt{\frac{-b}{3}}$ then substituting back into (\ref{eq:cuspb})
gives $a=-\frac{2}{3\sqrt{3}}|b|^{\frac{3}{2}}$ \cite{Kuz_1995}. At
the fold there are only two distinct solutions. The solution
$u\approx -\sqrt{\frac{-b}{3}}$ already discussed is a repeated root,
and so the other solution can be found by solving
$(u+\sqrt{\frac{-b}{3}})^2(u-\alpha )=a+bu+u^3$ to give $\alpha =
2\sqrt{\frac{-b}{3}}$ to lowest order. The jump is therefore the
difference of the two roots, i.e. $3\sqrt{\frac{-b}{3}}$.

On the fold $a^2=-\frac{4}{27}b^3$, with $a$ and $b$ linear functions
of the original variables $x$ and $\mu$ via (\ref{eq:axmu}) and
(\ref{eq:bxmu}). To lowest order this implies that the fold is
tangential to the line $a=0$, i.e. $x\sim -W_3\mu/W_2$ and
so the evaluating $b$ on this line the jump becomes
\[
3\sqrt{2\Big| \frac{W_{12}W_3-W_{13}W_2}{W_2W_{111}}\mu\Big|}
\]
Thus the jump is order  $\sqrt{|\mu |}$. 
\end{proof}

Note that although we have derived these properties for the up map alone, they are preserved by composition with a smoothly invertible down map.

%
%
%
%
%
\section{Square root discontinuities in monotonic circle
  maps}\label{sect:squareroot}

The discontinuous circle maps derived in section~\ref{sect:tangencies}
have two features: there is an interval of values which cannot be
reached by the map, and on one side of this gap the derivative of the
map tends to infinity. In this section, we will consider the
fundamental bifurcation sequences for periodic solutions of the
simplest form of such maps and illustrate the observations with two
examples.

In terms of the lift $F$ of such a  circle map, defined on the
real line with $F(x+1)=F(x)+1$, coordinates can be chosen such that
\[
\lim_{x\downarrow 0}F(x)+1>\lim_{x\uparrow 1}F(x) ,
\]
and $F$ is continuously differentiable and strictly increasing on $(0,1)$. In other words
the gap is a jump upwards at integer values of $x$. The square root discontinuity
implies that
\[
 \lim_{x\downarrow 0}F^\prime (x)=c>0,\quad  F^\prime (x)\to \infty~\textrm{as}~x\uparrow 1,
\]
or vice versa. Thus $F$ is strictly increasing and hence has a unique rotation number \cite{Rhodes_1986}.

Now consider families $(F_\mu )$ of such maps. Provided $F_\mu$ is
continuous in $\mu$ then the rotation number is a continuous function
of $\mu$ (\cite[Theorem 5.8]{Rhodes_1991}). If $F_\mu$ is an
increasing function of $\mu$ for each fixed $x$, then the rotation number
is monotonic in $\mu$, irrational values of the rotation number
exist at isolated values of $\mu$ (\cite[Proposition
6.1]{Rhodes_1991}), and the invariant set for an irrational
  rotation number is a Cantor set
\cite{Keener_1980}.  Finally, under the same conditions, the set of
$\mu$ with a given rational rotation number is a non-trivial closed
interval, i.e. the maps have phase locking analogous to that of
continuous homeomorphisms of the circle (\cite[Theorem
6.6]{Rhodes_1991}).

The gaps introduce a second mechanism by which phase-locked solutions
can be created or destroyed, that is via border collisions.
We will call a border collision with an end point with an infinite
derivative a \emph{type I} border collision and use \emph{type II} for
a border collision with an endpoint with a finite derivative.
%
%
Within each phase-locked region, there is a branch of periodic
solutions connecting $0^+$ with $1^-$. If $F_\mu^\prime (x)<1$ for all
$x\in (0,1)$ then there are two border collision bifurcations, one
creating a stable fixed point and the other destroying it, and
similarly for $F_\mu^q$ in a $p/q$ phase locked region. There can be no
saddle-node bifurcations in such maps.
However, for maps with a square root singularity $F_\mu^\prime (x)$
cannot be less than one for all $x\in (0,1)$, although to remain an
injection $F_\mu^\prime (x) <1$ for some $x\in (0,1)$. For maps
$F_\mu(x)$ that are monotonic increasing in both $x$ and $\mu$, the
border collision with the square root singularity at $1^-$ always
leads to the creation of an unstable periodic orbit when $\mu$ is
increased. The border collision with~$0^+$ can either lead to the
creation of a stable periodic orbit (if $(F^q)^\prime (0^+)<1$) or the
loss of a unstable periodic orbit (if $(F^q)^\prime (0^+)>1$).

%
There appear to be two possible `simplest' sequences of bifurcations for the
creation/destruction of periodic orbits of a given rotation number
$p/q$:
\begin{enumerate}
\item[(a)] border collision $\to$ border collision $\to$ saddle-node bifurcation;
\item[(b)] saddle-node bifurcation $\to$ border collision $\to$ border
  collision $\to$ saddle-node bifurcation.
\end{enumerate}
These two bifurcation sequences are illustrated in the bifurcation
diagram shown in Fig.~\ref{fig:bifseq}.  Case (a) occurs if
$(F^q)^\prime (0^+)<1$ when first a stable periodic orbit is created
in a border collision at $0^+$, then a second unstable periodic orbit
is created in a border collision at $1^-$ and finally both solutions
disappear in a saddle-node bifurcation.  Case~(b) occurs if
$(F^q)^\prime (0^+)>1$.  In both bifurcation sequences, there is
  always a stable periodic solutions in each phase-locked region.  Of
course, 
the bifurcation sequences could have 
extra pairs of saddle-node bifurcations. However, if $F_\mu$ is
convex, then also $F^q_\mu$ is piecewise convex and case (a) will
always occur as the convexity implies that $(F^q)^\prime$ is monotonic
increasing, hence at most one saddle-node bifurcation is possible.
Note also that $F^\prime (0^+)\lessgtr1$ does not imply that
$ (F^q)^\prime (0^+)\lessgtr1$ for all $q>1$, so it may be possible to find
both case (a) and case (b) in examples.
%
%

\begin{figure}
\begin{center}
\includegraphics[scale=1]{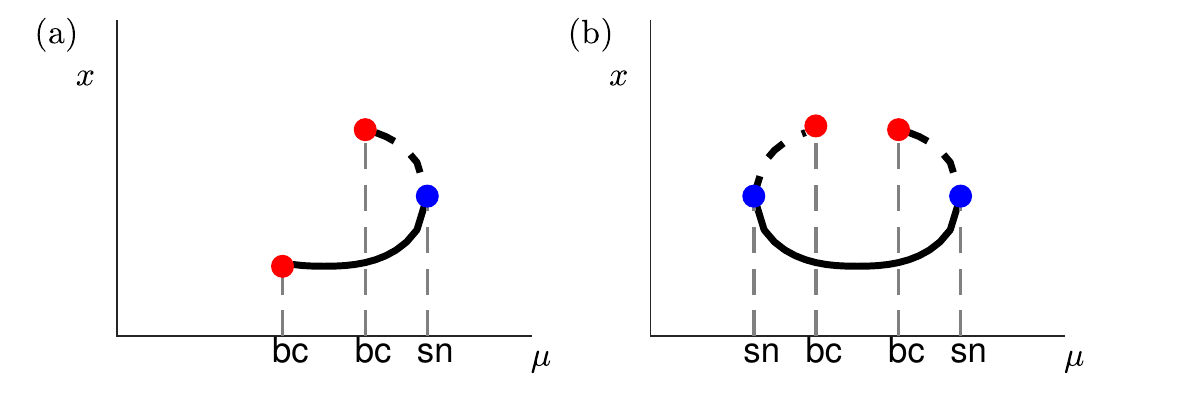}
\end{center}
\caption{Bifurcation diagrams illustrating the two sequences (a)
border collision $\to$ border collision $\to$ saddle-node bifurcation;
(b) saddle-node bifurcation $\to$ border collision $\to$ border
  collision $\to$ saddle-node bifurcation.
}
\label{fig:bifseq}
\end{figure}

The monotonicity of the map in the parameter ensures that structures
are not repeated; if this parameter monotonicity assumption is
not true, the essential results hold true but there may be multiple
parameter values with a given irrational rotation number (or even
intervals), and the bifurcation structure of phase-locked regions can
be considerably more complicated. The important observation is that
the number of times $0^+$ (resp. $1^-$) is periodic in a phase locked
region must be odd (counting with multiplicity, so a solution that
approaches 0 from above and then moves back up again is counted as two
intersections) if the rotation number changes from below a given
rational number to above it, and so at least one branch of solutions
connects $0^+$ to $1^-$.  For the fundamental cases (a) and (b) this
implies that there may be extra pairs of saddle-node bifurcations
and/or border collision bifurcations whose net effect is to add no
changes to the stability of orbits, or create isolas.

In the next sections, we illustrate the bifurcation sequences (a) and
(b) with a `canonical' example and then show how they are manifested
in the STS.

\subsection{Example: `canonical' square root singularity maps}
To illustrate the consequences of the square root singularity we have
constructed an example which is monotonic in $x$ and its parameters
and demonstrates the bifurcation sequences described above. This
example can be derived from threshold models, by using the inverse to
define an upper threshold with the gap replaced by 
a smooth continuation and taking 
the lower threshold to be a constant. 


For $n\geq 2$, define the lift $F_n$ by
\begin{equation}
  F_n(x)  =
  a +b\left(1 - c_1 \sqrt{1-x} +
    (c_1-1)\,\left(\sqrt{1-x}\, \right)^n\right),
  \quad x\in [0,1), \label{eq:sqrmap}
\end{equation}
and extend $F_n$ to the real line using the periodicity condition
$F_n(x+m)=F_n(x)+m$, $m\in{\mathbb Z}$.
Taking
\[
  c_1 = \frac{nb-2c}{(n-1)b},
\]
gives
\begin{equation}\label{eq:mon_incr}
  F_n(0)=a, \quad F_n^\prime (0)=c, \quad \textrm{and}\quad
  \lim_{x\uparrow 1}F_n(x)=a+b.
\end{equation}
The assumption $b\in(0,1)$ gives a discontinuity at
$x=m\in\mathbb{Z}$. Since
\begin{equation}\label{eq:sqrtderiv}
F_n^\prime (x) = \frac{b}{2\sqrt{1-x}} \, \left( c_1+(1-c_1)n\left(\sqrt
    {1-x}\, \right)^{n-1} \right),
\end{equation}
$F_n$ is the lift of a monotonic circle map if
$0<c_1\leq\frac{n}{n-1}$, i.e. $0\le c<\frac{nb}{2}$.  Note that at
$c=\frac{nb}{2}$, $c_1=0$ and the square root singularity disappears.
The lift $F_n(x)$ is increasing in the parameter~$a$.

First we look at the case $n=2$, see
Fig.~\ref{fig.mapplots}(a),
as the structure of the mode-locked
region corresponding to rotation number zero (i.e. the fixed points)
can be described analytically. We have
\begin{equation}\label{eq:simplesqr}
  F_2(x) 
  = a+ 2(b-c) + (2c-b)x +2(c-b)\sqrt{1-x}.
\end{equation}
Since
\[
  F_2^\prime (x)=2c-b+\frac{b-c}{\sqrt{1-x}}, \quad x\in(0,1),
\]
$F_2$  is a monotonic increasing function of the real line with a
quadratic singularity at $x=m\in \mathbb{Z}$ provided
$0\leq c<b$. This implies that $F_2$ is convex and $F_2'(0)=c<b<1$.
Fixed points of $F_2$ in $(0,1)$ satisfy
\[
x=a+b + (b-2c)(1-x) -2(b-c)\sqrt{1-x}.
\]
and these are created in either border collisions, in which the fixed
point is at the discontinuity (0 or 1) or saddle-node bifurcations (as
the map is increasing there can be no other smooth bifurcations).

Thus there is a border collision at $x=0$ if $a=0$ and since
$F_2'(0)<1$,  this is a border collision creating a stable fixed
point in $a>0$ locally. There is a border collision at $x=1$ if
$a=1-b$, and this is an border collision creating an unstable
fixed point in $a>1-b$ locally.
Saddle-node bifurcations occur if $F_2^\prime (x)=1$ at a fixed point.
A straightforward manipulation shows that the locus of these bifurcations is
\begin{equation}\label{eq:locsnb}
  a=1-b+\frac{(b-c)^2}{1+b-2c}.
\end{equation}
Fig.~\ref{fig.mapplots}(b) shows the bifurcation structure in the
$(a,c)$-plane for $b=0.7$. 
As $a$ increases there is a border collision bifurcation at
$a=0$ which creates 
a stable fixed point, followed by a border collision bifurcation at
$a=0.3$ 
which creates 
an unstable fixed point and finally the saddle-node bifurcation
destroying the pair of orbits created in the two border collision
bifurcations. This is precisely case~(a) as described at the start of
this section.

\begin{figure}[htb]
\centering
\includegraphics[scale=1]{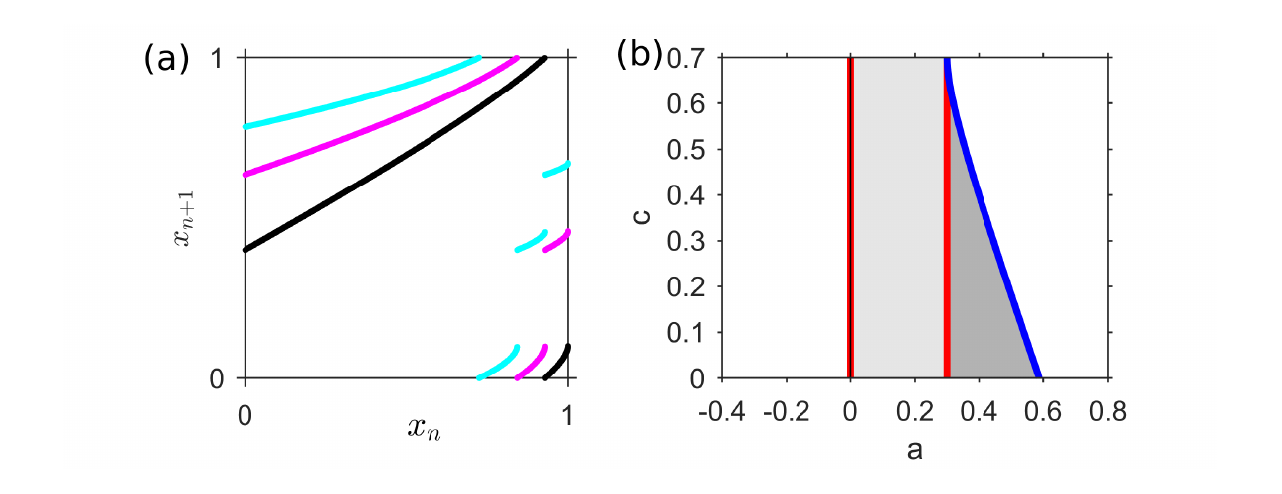}
\caption{
(a) The map with lift $F_2(x)$ (black) and its second (magenta) and third
(cyan) iterates for $a=0.4$, $b=0.7$, $c=0.5$.
(b) The bifurcations in the $(a,c)$-plane of the fixed points
($\rho=0$) of the map with lift $F_2$ defined by
(\ref{eq:simplesqr}) with $b=0.7$. Border collision bifurcations
are shown in red and saddle-node bifurcations are shown in blue. In the
light/dark grey regions there are one/two fixed points.}
\label{fig.mapplots}
\end{figure}

Next we return to the general map $F_n$.
  Since
the map has a discontinuity at $x=1$, if the range of $F_n^{m-1}$
contains an integer, then the $m^{th}$ iterate of the circle map $f_n$
with lift~$F_n$ has $m$ discontinuities at the pre-images of
$x=1, f_n^{-1}(1), \ldots , f_n^{-(m-1)}(1)$, and the derivative of
$f_n^m$ is unbounded on one side of each of these pre-images as
illustrated in Fig.~\ref{fig.mapplots}(a).  If the map is convex, then
the higher iterates are piecewise convex.

\subsubsection{Periodic solutions}

For $a\in[0,1]$, $b\in(0,1)$ and $c\in \left[0,\frac {nb}2\right)$,
the lift $F_n(x;a,b,c)$ is monotonically increasing in $x$ (hence
orientation preserving) and piecewise continuous so the rotation
number is well-defined and if it is rational it can be linked to
periodic solutions of the map. The map is also increasing in all its
parameter as the derivatives of the lift $F_n$ with respect to the
parameters are positive. Indeed 
\[
\frac{\partial}{\partial a} {\Map}(x) = 1; \quad
\frac{\partial}{\partial b} {\Map}(x)
=\frac{(1-\sqrt{1-x})}{n-1}\,
\left(n-\sum_{j=0}^{n-1}\left(\sqrt{1-x}\right)^j\right);
\]
\[
\frac{\partial}{\partial c} {\Map}(x) =
\frac{2\sqrt{1-x}}{n-1}\left(1-\left(\sqrt{1-x}\right)^{n-1}\right);
\quad x\in[0,1].
\]
As the maps are increasing injections,  the rotation number
\[
\rho(a,b,c) = \lim_{k\to\infty}\frac{F_n^k(x;a,b,c)-x}{k}.
\]
is well-defined and increases continuously~\cite{Rhodes_1986}
from~$\rho=0$ to~$\rho=1$ when~$a$ increases from~$0$ to~$1$ and~$b$
and~$c$ are fixed. Since $F(0^+)=a$, it follows immediately that if
$a=m\in\mathbb{Z}$, then $\rho(m,b,c)=m$ for any $b,c$.

Consider the border collision bifurcations of fixed points for $F_n$ in $(0,1)$, i.e.,
the $a$ values for which $\rho\in\mathbb{Z}$. There is a border collision
bifurcation if either $F(0)=0$ or $\lim_{x\uparrow 1}F(x)=1$. These
occur at $a=0$ and $a=1-b$ respectively.  As $a$ increases, the latter
always destroys an unstable fixed point, whilst the former creates a
stable fixed point if $c<1$ and an unstable fixed point if
$c>1$. Compared to $F_2$, the new case here is that if $c>1$ then both
border collision bifurcations can be unstable. In this case the
simplest bifurcation sequence is to have case~(b): two saddle-node bifurcations
bounding the mode-locked region.
Further bifurcation curves have to be determined numerically.
\begin{figure}[htb]
\begin{center}
\includegraphics[scale=1]{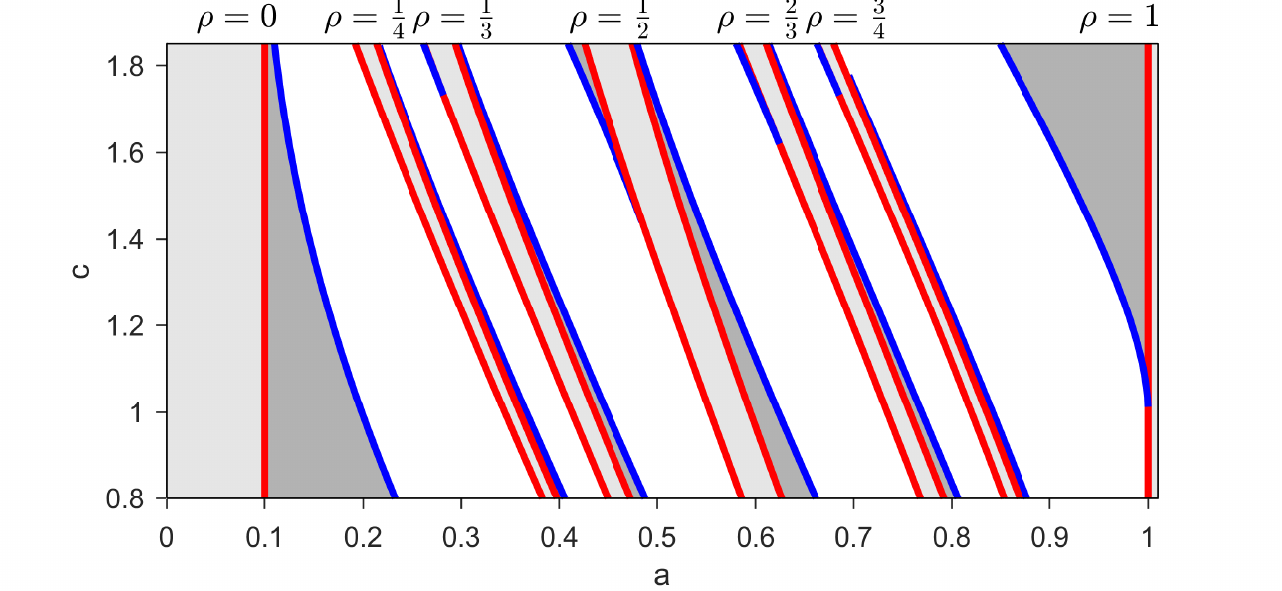}
\end{center}
\caption{The bifurcation set in the $a$-$c$ plane
for the map $F_n(x)$ up to the fourth iterate. Parameters:
$b=0.9,$ $n=5$, $c\in[0.8,1.85]$.
  The light/dark shaded regions are regions where
  one/two fixed points exist with the labelled rotation number.
  Transitions between different numbers of fixed points are either
  saddle-node bifurcation curves (blue) or  border collisions (red).
  For each pair of border collision
  curves, the right curve is the type I border collision
  and the left curve is the type II border collision.
}
\label{fig.bif5_9}
\end{figure}

In Fig.~\ref{fig.bif5_9} the saddle-node bifurcation and border
collision curves associated with the map and its first four iterates
in the $a$-$c$ plane for $b=0.9$, $n=5$ are depicted. It illustrates
the two possible types of fundamental bifurcation sequences and the
smooth change from the bifurcation sequence~(a) to the bifurcation
sequence~(b). The transition occurs when $(F^q)'(0^+)=1$ and the
saddle-node curve merges with the type~II border collision
curve. Furthermore it shows
\begin{itemize}
\item Every phase locked region is always bounded by a saddle-node curve on
  the right.
\item The border collision on the right is associated with a type~I
  border collision with an infinite derivative and hence cannot merge
  with the right saddle-node curve.
\item The border collision on the left is associated with a type~II
  border collision with a finite derivative. For the smaller values of
  $c$ (the derivative of $F_n'$ at $0^+$), this is also the left bound
  of the phase locked region.
\item For some phase locked regions, the left bound for the larger $c$
  values is formed by a saddle-node curve. This curve merges with the
  type~II border collision curve when $c$ decreases.
\item The function $F_n$ is convex for $c\in \left[0,\frac{bn}{2(n-1)}\right]
  = \left[0,0.5625\right]$ and only the bifurcation
  sequence~(a) occurs for those parameter values.
\end{itemize}

\subsection{Example: the Sinusoidal Threshold System (STS)}

Next we continue with our illustrative example of a threshold system, the STS
given by equations (\ref{eq:toymap1})-(\ref{eq:toymap3}).  As
discussed in section~\ref{sect:tangencies}, this system has an
associated circle map with a gap if {the flow is tangent to the upper
  threshold on contact. This will happen if} $h'(x) > \gamma$, i.e.
$ \alpha > \gamma$.  Thus increasing $\alpha$ smoothly changes the map
from continuous to a map with a gap. This enables us to see how the
familiar saddle-node bifurcation structure for smooth maps without
  gaps, as shown in Fig.~\ref{fig:tongues_saddlenodes}, transitions
to the bifurcation sequences: first (b) saddle-node bifurcation $\to$
border collision $\to$ border collision $\to$ saddle-node bifurcation,
and then (a) border collision $\to$ border collision $\to$ saddle-node
bifurcation.

Recall that the sufficient conditions for a saddle-node bifurcation
curve of a $(p,1)$ fixed point are $\beta = p\tilde\gamma$ or
$\beta = p\tilde\gamma -\frac\alpha\pi$. We will  see now why this
condition is only sufficient.
%
An explicit expression for the infinite derivative type I border
collision of a $(p,1)$ fixed point can be derived {(recall that
  $\tilde \gamma = \frac{\gamma}{\gamma+1}$)},
$$
\alpha= \frac{\gamma^2 + 4\pi^2 \left (\tilde \gamma{p} - \beta\right )^2}{
  4 \pi \left( \tilde \gamma{p}- \beta \right) },
\quad \mbox{for} \quad
{\max\left(0\,,\,{p}\tilde \gamma - \frac{\gamma}{2 \pi}\right) \leq \beta < \tilde \gamma{p}}.
$$
This curve is a monotonically increasing function of $\beta$ starting at
the minimum value
of $\beta=\tilde \gamma{p} - \frac{\gamma}{2\pi}$, $\alpha =\gamma$,
where $t^n=t^{n+1}=1$ and asymptoting to the saddle-node line
$\beta=\tilde \gamma{p}$ where
$t^n=t^{n+1}=3/4$.

The finite derivative type II border collisions for $(p,1)$ fixed points satisfy
\begin{eqnarray*}
\cos 2 \pi x_b & = & \frac{\gamma}{\alpha}, \\
\sin 2 \pi x_c & = & \frac{2\pi}{\alpha} \left ( p \tilde \gamma -
                     \beta \right ) - 1, \\
\sin 2 \pi x_c -\sin 2 \pi x_b & = & \frac{2 \pi \gamma}{\alpha}
\left ( x_c - x_b \right ),
\end{eqnarray*}
where $x_c$ and $x_b$ are equivalent to the time points on the
periodic cycle indicated by $b$ and $c$ in Fig.~\ref{fig:gaps}. Note
that $x_b=x_c$ when $\alpha=\gamma$ and
$\beta=p \tilde \gamma - \gamma
/2\pi$, at which point the type I and type II border collisions
coalesce. 
To find the intersection of the type~II border collision curve
  and the saddle-node curve,
we observe that $x_c=\frac14$ along the saddle-node curve
$\beta=p\tilde\gamma-\frac \alpha\pi$. Substituting this into the
equations gives the relation $1+\sqrt{1-q^2}=q(\frac\pi2 +
\arccos(q))$, where $q=\frac\gamma\alpha$. This has a unique solution
$q^*\in(0,1)$, $q^*\approx 0.725$. Hence when $\alpha = \frac{\gamma}{q^*}$ and
$\beta=p\tilde\gamma-\frac \gamma{q^*\pi}$, then the type II border
collision curve and the saddle-node curve collide and the border
collision curve terminates the saddle-node curve. This implies that at
$\alpha = \frac{\gamma}{q^*}$, the bifurcation sequence  sequence (b)
transitions to the sequence (a).

\begin{figure}

\centering{\includegraphics[scale=1.0]{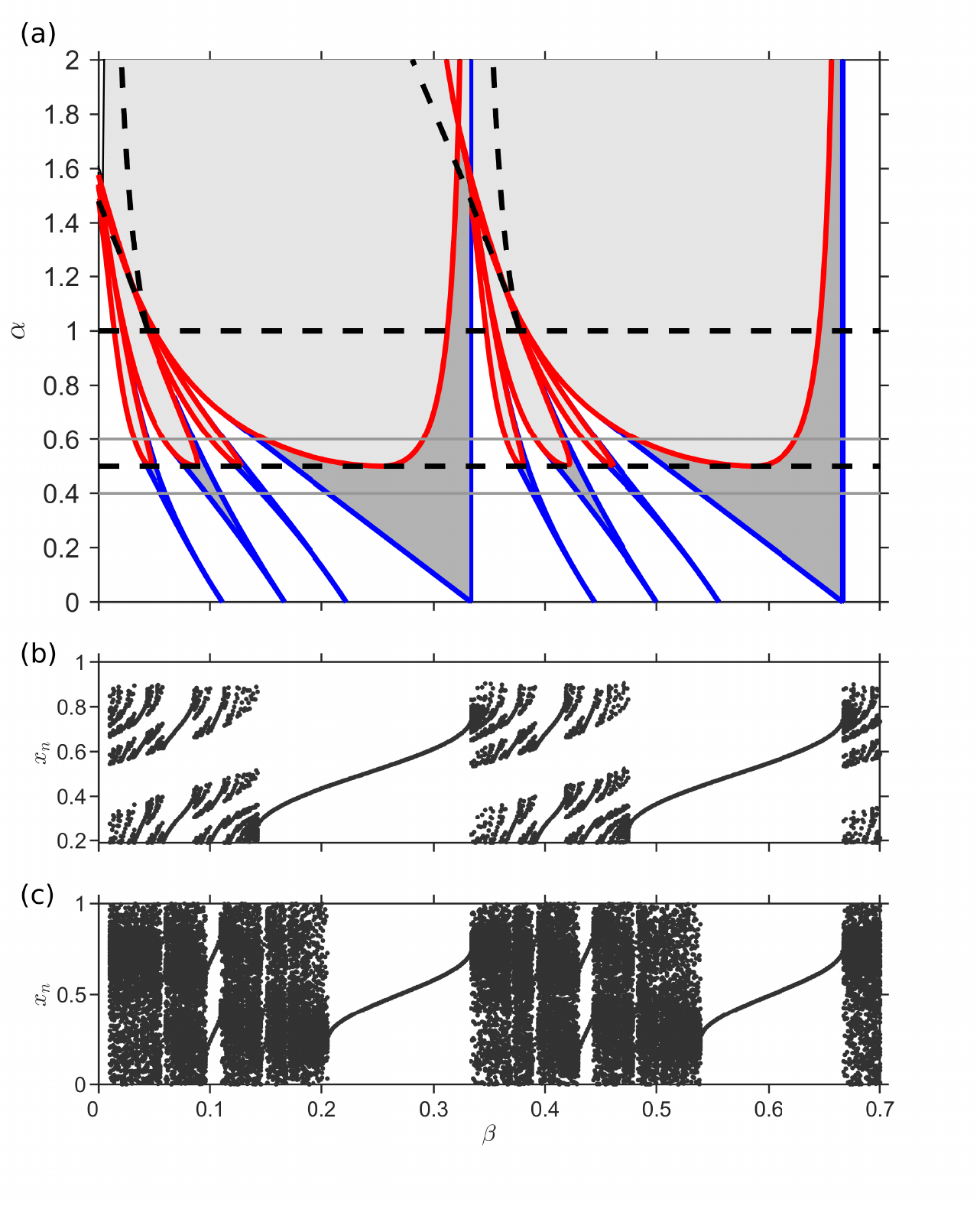}}

\caption{(a) Bifurcation set for $\gamma=0.5$ showing the relation
  between border collisions (red) and saddle-node bifurcations (blue).
  Border collisions to the left hand side of each minima are of type
  II and to the right hand side are of type I.  The dashed horizontal
  line at $\alpha=\gamma=0.5$ marks the transition from continuous to
  gap map.  The dashed horizontal line at $\alpha=1$ marks the
  transition from monotonicity to to nonmonotonicity. The dashed lines
  forming the `v' shape mark the transition from single to multiple
  gaps, see section~\ref{sect:nonmonotonic}.  For $\alpha<1$, the
  light/dark shaded regions correspond to regions of existence of
  one stable/a pair of fixed points. For $\alpha>1$ the map is nonmonotonic and the
  dynamics can be more complicated. In this region, period-doubling
  bifurcations also exist (not shown).
   (b) Bifurcation diagram
  showing stable solutions for $\gamma=0.5, \alpha=0.6$ (corresponding
  to the upper light grey line in (a)). The gaps in the map appear as
  bands of `forbidden' regions in the bifurcation diagram and result
  in the Cantor structure for quasi-periodic solutions. (c)
  Bifurcation diagram showing stable solutions for
  $\gamma=0.5, \alpha=0.4$ (corresponding to the lower light grey line
  in (a)).  The numerical bifurcation diagram has dark bands
  corresponding to the fact that there exist quasiperiodic solutions that densely
  fill the circle.}
\label{fig:bifdiagnogapandgap}
\end{figure}

Border collisions for general $(p,q)$ solutions can be found
numerically by solving the fixed point condition
(\ref{eq:STMpqsolution}) along with the requirement that the fixed
point occurs at the appropriate end point of the gap.
In Fig.~\ref{fig:bifdiagnogapandgap}(a) the bifurcation set for
$\gamma=0.5$ is shown for the first few iterates of the map.  Between
the lines $\alpha=\gamma=0.5$ and $\alpha=1$ the circle map is
monotonic with a gap. In this region, the border collisions form
u-shaped regions inside each tongue.  The left/right hand side of each
u-shaped region corresponds to the type II/type I border collision
bifurcations respectively. This illustrates how the sequences of
bifurcations seen in continuous circle maps transition to the sequences
seen in maps with gaps. The vertical derivative present in the
threshold maps implies that one side of the saddle-node tongues
is terminated by a border collision and the other side persists.

For $\alpha>1$, the map is non-monotonic. We consider the general
  transition from monotonic to non-monotonic maps in
  section~\ref{sect:nonmonotonic} and then continue with this example.


\section{Tangencies leading to non-monotonic maps}\label{sect:nonmonotonic}

We return to the general threshold maps. We have seen in
section~\ref{sect:tangencies} that tangencies of the up flow with the
upper threshold 
create discontinuities in the map.
In this section we will show that tangencies of the down flow with the
upper threshold lead to multiple pre-images (non-monotonicity) in the
down map $T_d$ (see Figure~\ref{fig:nonmonotonicity}).
This implies that
tangencies of the up flow with the lower threshold lead to
non-monotonicity in the up map $T_u$.  We will also discuss how and
when tangencies in the up or down flow imply non-monotonicity in the
full circle map.
%
\begin{figure}
\centering{\includegraphics[scale=1.0]{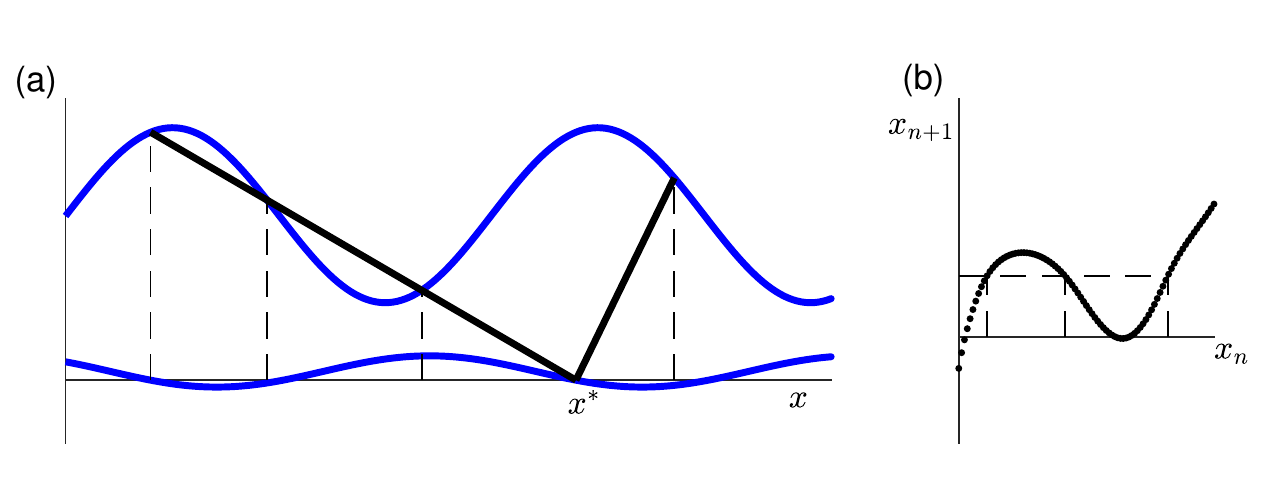}}
 \caption{(a) The point $x=0$ has multiple pre-images in the down map
   $T_d$, leading to non-monotonicity in the associated threshold
   system circle map, see (b).}
 \label{fig:nonmonotonicity}
 \end{figure}
Finally we will discuss how simultaneous tangencies in the up and
down maps lead to the combination of gaps and nonmonotonicity.  This
can lead to the presence of multiple gaps (see
Fig.~\ref{fig:multiplegaps}) and give rise to a codimension~2
bifurcation which organises the local bifurcation
structure. 
We illustrate the mechanism and the consequent bifurcations with the
STS {example}.

\begin{figure}
\centering{\includegraphics[scale=1.0]{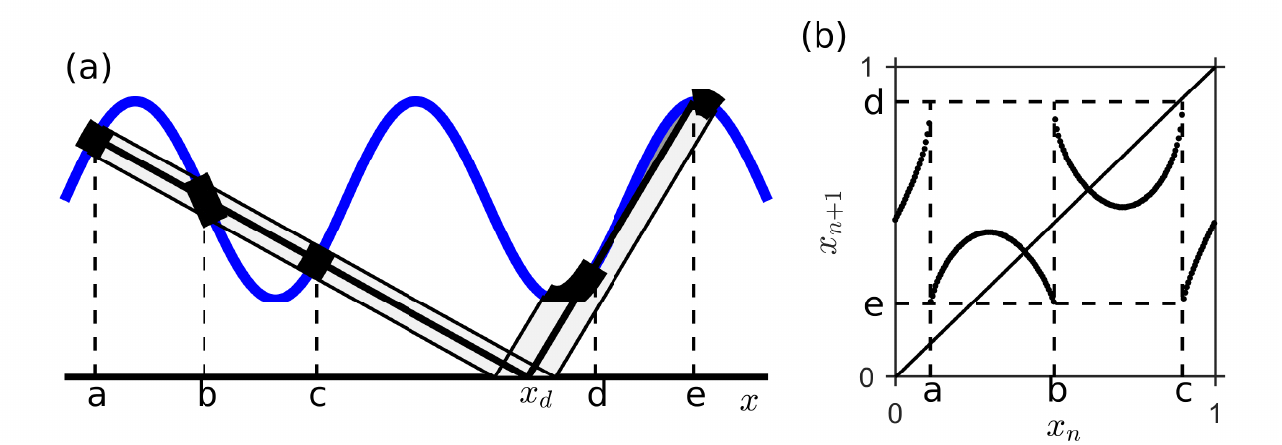}}
\caption{A tangency point between the down flow and the upper
  threshold leads to multiple pre-images, as shown in (a) for the STS
  ($\alpha=4, \beta=0.5, \gamma=3$). If there is also a tangency
  between the up flow and the upper threshold, then the corresponding
  threshold map has multiple gaps, where each gap has the same size
  with an infinite derivative on one side (at $x_{n+1}=d$) and a finite derivative on
  the other (at $x_{n+1} = e$).}
\label{fig:multiplegaps}
\end{figure}

\subsection{Existence of nonmonotonicity}

As in section~\ref{sect:tangencies}, we consider a family of
parameterised threshold maps with $\mathbb{P}$ the parameter
space. The threshold maps are the composition of two maps: the down
map $T_d:\mathbb R\times \mathbb P\to \mathbb R$ from the upper
boundary to the lower boundary, and the up map
$T_u:\mathbb R \times \mathbb P\to \mathbb R$ from the lower boundary
to the upper boundary.
A map is monotonic if every point in its range has exactly one
pre-image. By using the backward flow, we can find the pre-images of
the down map. Let's   consider the function
\begin{equation}
  \label{eq:tW}
\tW(\tau,x,\mu) = \psi_{-\tau}(g(x,\mu),\mu)-h(x-\tau),
\end{equation}
i.e., $\tW$ is very similar to~$W$ as defined in~\ref{eq:basic0},
but uses the backward down flow starting at the lower threshold $g(x,\mu)$.
If $(\tau^*,x^*,\mu^*)$ satisfies $\tW(\tau^*,x^*,\mu^*)=0$, then
$x^*-\tau^*$ is a pre-image of $x^*$ for the down map~$T_d$.  Using the
convention for derivatives from section~\ref{sect:tangencies}, if also
$\tW_1(\tau^*,x^*,\mu^*)=0$, then there is a tangency between the down
flow~$\psi_\tau(h(x,\mu),\mu)$ and the upper threshold $h(x,\mu)$ at
$x=x^*-\tau^*$, $\mu=\mu^*$ and $\tau=0$. Due to the similarity
between $\tW$ and $W$, the results of section~\ref{sect:tangencies}
give the local behaviour near a pre-image. Let $(\tau^*,x^*,\mu^*)$
satisfy $\tW(\tau^*,x^*,\mu^*)=0$, hence $x^*-\tau^*$ is a pre-image
for $x^*$ under $T_d$. Assume the non-degeneracy conditions
$\tW_2(\tau^*,x^*,\mu^*)\neq 0$ and $\tW_3(\tau^*,x^*,\mu^*)\neq 0$,
then we have the following results.
\begin{itemize}
\item If $\tW_1(\tau^*,x^*,\mu^*)\neq 0$, then for $(x,\mu)$ nearby
  $(x^*,\mu^*)$ there is a locally unique pre-image.
\item If $\tW_1(\tau^*,x^*,\mu^*)=0$ and
  $\tW_{11}(\tau^*,x^*,\mu^*)\neq 0$, then there is a fold along a
  curve in the $(x,\mu )$-plane given by $\tW_2x+\tW_3\mu =0$ in
  lowest order. The fold has again two interpretations: it represents
  a unique pre-image along the fold line.
  And in the threshold system it
  gives also a persisting simple tangency between the upwards
  trajectory and the lower threshold at this point. When
  $\tW_{11}(\tW_2x+\tW_3\mu)< 0$ then there are two pre-images, one
  less and one greater than $x^*-\tau^*$. Both pre-images are relevant
  for the map $T_d$, which has a turning point at the unique
  pre-images on the fold line, i.e, at the tangency points,
  see Fig.\ref{fig:nonmonotonicity}.
\item If $\tW_1(\tau^*,x^*,\mu^*)=0=\tW_{11}(\tau^*,x^*,\mu^*)$ and
  $\tW_{111}(\tau^*,x^*,\mu^*)\neq 0$, then again generically we have
  cusp unfolding and locally there is a change in monotonicity with
  two turning points emerging in the map~$T_d$.
\end{itemize}
As $T_d$ is periodic, this local analysis shows that globally the map
$T_d$ always has an even number of tangency points and an odd number
of pre-images (counting multiplicity at the degenerate points).





\subsection{Simultaneous tangencies}

We have now shown for the up and down maps that tangencies between the
flow and the nearby threshold correspond to non-monotonicity and
tangencies between the flow and the opposite threshold correspond to
discontinuities in those maps (section~\ref{sect:tangencies}). The
threshold map is a composition of the up and down maps, hence these
tangencies will influence the monotonicity and continuity of the
threshold map. The derivative of the threshold map is
\[
  (T_u\circ T_d)'(x) = T_u'(T_d(x))\, T'_d(x),
\]
thus a tangency between the down flow and the upper threshold leads to
a turning point in the threshold map. And a tangency between the
up flow and the lower threshold leads to a turning point in the
threshold map if this tangency occurs at a point that is in the range
of the up map.

If both the up and down map have tangencies with the upper threshold,
then the result will be a non-monotonic, discontinuous map. The
presence of both gaps and non-monotonicity can also lead to further
structural changes in the map: a transition from a single gap to three
(or more) gaps.  The transition to multiple gaps occurs as follows
(see also Figure~\ref{fig:multiplegaps}). Suppose that the up flow is
tangent to the upper threshold at a first intersection point at $x=d$.
This leads to a gap in the up map~$T_u$, say at $x=x_d$, i.e,
$T_d(x^-_d)=d$ and $T_d(x^+_d)=e$ for some $e>d$.  When the down map
is monotonic (i.e. has no tangencies between the down flow and the
upper threshold), the point $x=x_d$ will have a single pre-image in
the down map and there will be a single gap in the threshold map.
When the down map is non-monotonic, the point $x=x_d$ can have three
(or more) pre-images, as illustrated in Fig.~\ref{fig:multiplegaps}.

In the corresponding circle map, there will be a gap associated with
each of these pre-images. In each case the gap arises from the same
tangency, thus the size and the qualitative nature of the gap will be
preserved.  The unbounded derivative can occur either to the left or
the right of the gap depending on the slope of the upper threshold at
the pre-image.

The transition point between one and three gaps occurs
when the down map has a tangency with the upper
threshold and this tangency point is mapped into $x=x_d$.
In the notation of Figure~\ref{fig:multiplegaps}, at this
special point, we have $b=c$ and the threshold map maps $b$ into
$d$. This is an isolated point in the map as all points nearby $b$ get
mapped nearby~$e$. The finite derivate at $e$ vanishes and there is no
derivative at $d$ as it is an isolated point. How the creation of
multiple gaps plays out in the STS circle map is illustrated in Fig.~\ref{fig:wedge}.
The two points at which tangencies with the upper threshold map
into $x=x_d$ correspond to a local maximum and a local minimum
of the circle map. These tangencies are mapped into $x=x_d$ when
the local maximum coincides with the
infinite derivative (see Fig.~\ref{fig:wedge}(a))
or the local minimum coincides with the finite derivative
(see Fig.~\ref{fig:wedge}(b)). The isolated point in both cases is
marked in orange.

Tangencies in both the up and down flow with the lower threshold do
not lead to multiple gaps. This apparent asymmetry is a consequence of
the fact that we have considered the map from the upper threshold to
the upper threshold ($T_u \circ T_d$). Generically, the tangency of
the down map with the lower threshold creates a gap, this gap persists
under the action of the up map, and hence creates a gap in the
threshold map. Nonmonotonicity of the up map (tangencies of the up
flow with the lower threshold) do not affect the pre-images of the
gap. In this case, the only mechanism in which multiple gaps can be
created is by non-monotonicity of the down map, i.e, the down flow
being tangent simultaneously to the upper and lower threshold. Thus
the multiple gaps in the threshold map reflect the multiple tangencies
in the down map.  In section~\ref{sect:cod2} we discuss further the
influence of the order of composition on the apparent structure of the
map.

The consequences of non-monotonicity for bifurcations in the standard
circle map are discussed in~\cite{Mackay_1986}. All the typical
features of period-doubling bifurcations and the associated transition
to chaos can be seen in threshold systems {too}.

\subsection{Example: The sinusoidal threshold system (STS)}

Consider the STS given by equations
  (\ref{eq:toymap1})-(\ref{eq:toymap3}).  As the lower threshold is
  flat, there is no tangency between the up flow and the lower
  threshold, hence the up map~$T_d$ %
is monotonic and the down map~$T_d$ is continuous.
Thus the STS map is monotonic if and only if the down map~$T_d$ %
is monotonic, i.e., if and only if $h'(x) \leq 1$ for all~$x$, which
is equivalent to $0<\alpha \leq 1$.  Transitions from one to three
gaps occur when both the up and down flow have tangencies with the
upper threshold and the tangency point of the down flow with the upper
threshold is mapped into the $T_u$ pre-image of the tangency point of
the up flow with the upper threshold.

These transitions can be computed numerically and are shown for the
particular case $\gamma=0.5$ {by the v-shaped curves formed by the
  dashed lines in Fig.~\ref{fig:bifdiagnogapandgap}(a) and}
Fig.~\ref{fig:wedge}(c).
Even though there are three gaps, no new
border collision curves will be formed. This follows from the
observation that the fixed points and bifurcation curves of the maps
$T_u\circ T_d$ (mapping upper threshold into upper threshold) and
$T_d\circ T_u$ (mapping lower threshold into lower threshold) are
equivalent. The latter map is the composition of a non-monotonic,
continuous map acting on a monotonic, discontinuous map. So the gap
has a unique pre-image, implying that there are at most two border
collision curves, see also the section below.

\begin{figure}
\begin{center}
\centering{\includegraphics[scale=1.0]{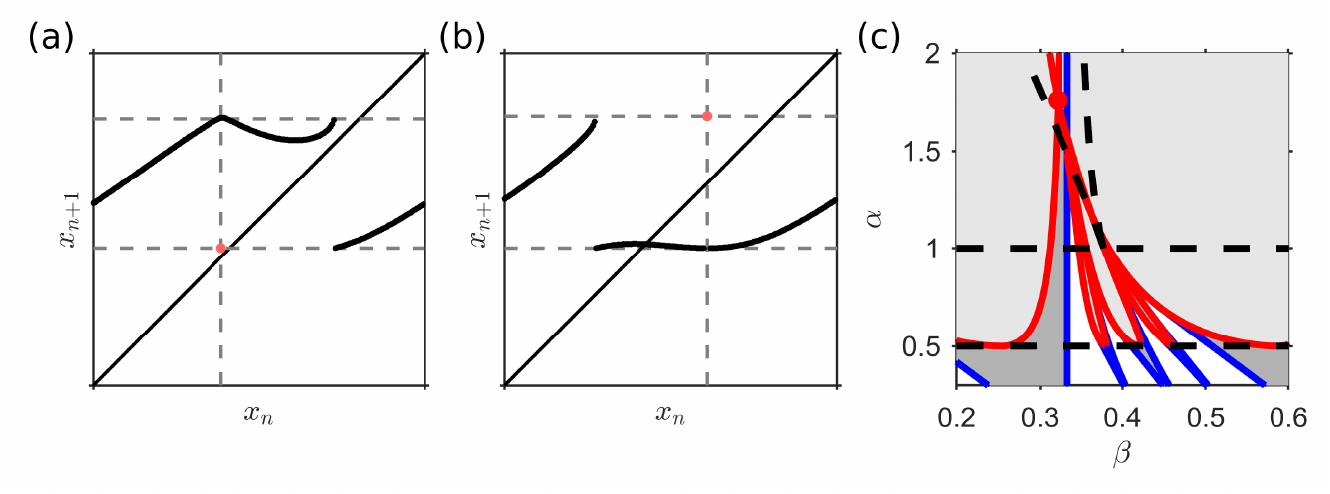}}
\end{center}
\caption{
(a) Map on the left-hand edge of the v-shaped wedge
for $\alpha=1.3$, $\beta=0.3508$, $\gamma=0.5$. This left-hand edge corresponds
to the point when the local maximum coincides with the side of the gap
with infinite derivative. The orange dot denotes the isolated point in
the map.
(b) Map on the right-hand edge of the v-shaped wedge
for $\alpha=1.3$,  $\beta=0.3653$, $\gamma=0.5$.
(c) Bifurcation set for the
STS for $\gamma=0.5$ showing a blow-up of the v-shaped region.
}
\label{fig:wedge}
\end{figure}

Fig.~\ref{fig:wedge}(c) shows that the type I border collisions for the
$(1,1)$-tongue and the type~II border collisions for the
$(2,1)$-tongue cross in the three-gap region (inside the v-shaped
region).
The map at this point is shown in Fig.~\ref{fig:cod2}(c). The two
border collision points are denoted by $c_0$ (type II border
collision) and $c_1$ (type I border collision), i.e., the tangency
between the up flow and the upper threshold is at $c_1$.  This implies
that there is some $x_d$ such that $T_u(x^-_d)=c_1$, $T_u(x^+_d)=c_0$,
and $T_d(c_0)=x_d=T_d(c_1)$.
Many of the border collisions from the intermediate tongues appear to
converge on this crossing point, suggesting that it forms a
codimension two point that organises the local bifurcation
structure. In the next section we will show that this is indeed the
case.

\begin{figure}
\centering{\includegraphics[scale=0.9]{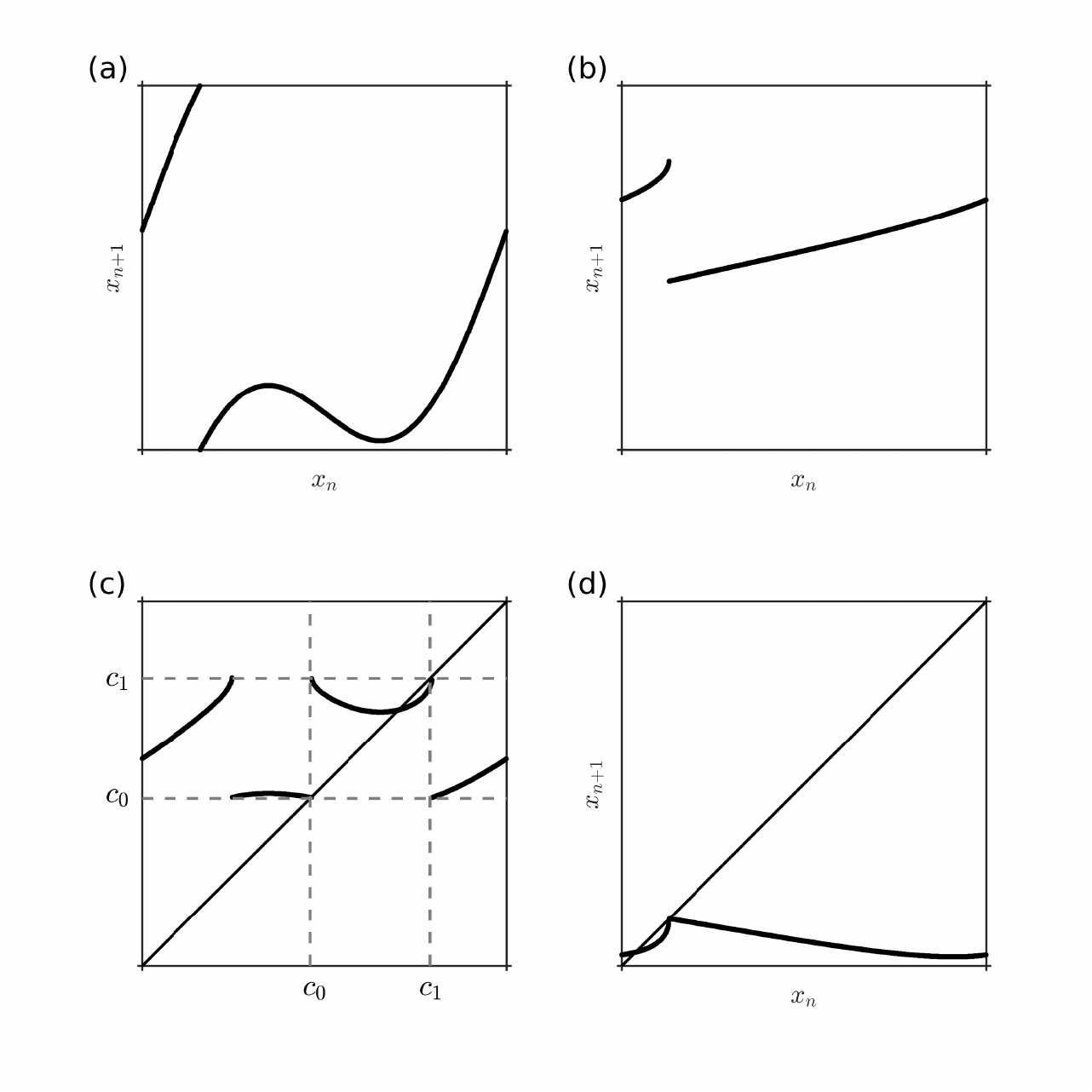}}
\caption{Maps at the intersection point of the border collisions, the point
which is marked by a red dot in Fig.~\ref{fig:wedge}(c).
(a) The down map $T_d$ which is non-monotonic because of a tangency
of the down flow with the upper threshold. (b) The up map
$T_u$ which contains a gap as a consequence of a tangency
of the up flow with the upper threshold. (c)
 $T_d \circ T_u$ (d) $T_u \circ T_d$.
}
\label{fig:cod2}
\end{figure}

Though we will not go into the details here, we note that explicit
expressions can be derived for the first period-doubling bifurcation
for $(p,1)$ fixed points, giving rise to fixed points $(2p,2)$.  The
case $\gamma=3$ is particularly interesting since at this value, the
period-doubling and type~I border collision curves coincide.




\subsection{A codimension two bifurcation}\label{sect:cod2}

In this section we conjecture 
that if the up flow has a generic
tangency with the upper threshold and the down map maps both end points
of this gap into the pre-image of the up map associated with
the tangency, then the corresponding codimension~2 border collision
bifurcation point is an organising centre for the local
bifurcations. Introducing the parameter vector $\bmu\in\mathbb{R}^2$,
such codimension~2 point is characterised by the following
properties. There exist $c_0$, $c_1$, $c_0\neq c_1$ such that
 \begin{eqnarray*}
  T_u(x_0^-;\bzero)&=c_0; \,\,T_u(x_0^+;\bzero)=c_1;\\
  T_d(c_1;\bzero)&=T_d(c_0;\bzero)=:x_0; \quad
       T'_d(c_1;\bzero)\,T'_d(c_0;\bzero)<0. 
 \end{eqnarray*}
  This implies that the threshold map~$T_u\circ T_d$ has multiple gaps
  and there are two simultaneously two border collisions: one type I
  border collision at~$c_0$ and one type II border collision
  at~$c_1$. To analyse this border collision, it is convenient to consider
  the map from the lower threshold to the lower threshold, i.e.,
  $T_d\circ T_u$. This map has the same fixed points and bifurcations
  as the threshold map~$T_u\circ T_d$. Under the conditions above, the
  border collisions in the map $T_d\circ T_u$ collide when
  $\bmu=\bzero$. The map is continuous at $x_0$ and the derivatives on
  each side have opposite signs (with one of them having a square root
  singularity).

  In section~\ref{sect:tangencies} it is shown that the gap in $T_u$
  persists for $\bmu$ small and that generically the derivative of
  $T_u$ nearby the gap has the same sign at both sides of the gap
  (with one of them having a square root singularity).  For
  $\bmu\neq\bzero$, the discontinuity in $T_u$ leads to a
  discontinuity in the threshold map $T_d\circ T_u$ with the
  derivatives at each side of the gap still have opposite signs. Hence
  the parameter plane nearby $\bmu=\bzero$ can be divided into four
  regions which are such that the threshold map has two solutions in
  one region, one solution in two regions, and no solutions in one
  region.  In~\cite[\S 7.1.1]{Granados_2017}, it is shown that if the
  local derivatives are less than~1 (i.e., the map is contracting)
  such maps are organising centres for the local bifurcations.  It is
  also noted that this bifurcation point is equivalent to the
  \emph{gluing bifurcation} in~\cite{Gambaudo_1984,Gambaudo_1988} and
  \emph{big bang bifurcation} in~\cite{Avrutin_2006}. Although we do
  not satisfy the condition that the derivatives are less than~1
  (there is a square root singularity at one of the end points), we
  still see the organising centre in the bifurcation diagram. Avrutin
  et al \cite{Avrutin_2010} have studied maps on the real line with
  similar singularities in the derivative, although our local
    behaviour does not seem to be one of the cases they study in
    detail. Our behaviour looks more like a 1D Nordmark map at the
    grazing point.

\section{Other mechanisms: Cherry flows}\label{sect:Cherryflow}
Circle maps arise naturally in other contexts. If two oscillators interact then a
lowest order model might relate the evolution of the phase of each oscillator. 
In this case the natural phase space is the torus (one angle for each phase)
leading to a differential equation on the torus. Examples include neuronal
models such as the Kuramoto equations \cite{Kopell_2002} and  models of
breathing patterns \cite{Baesens_2013b}.

Suppose that a flow on the torus has a global cross-section transverse to the flow.
Then the return map on the global section is a circle map as
discussed in previous sections. There are two natural classes \cite{Baesens_2013a}.
In a  Poincar\'e flow this map is continuous and monotonic, so the
classic results about the existence of rotation numbers and the dichotomy of
dynamics depending on whether the rotation number is rational or irrational hold.
On the other hand, a Cherry flow \cite{Baesens_2013a,Palis_1982,Palmisano_2015} has at least
one unstable stationary point and one saddle, which can create a return map
which is monotonic and with a discontinuity. Such maps
have a well-defined rotation number and for continuous perturbations of the
defining vector field this rotation number varies continuously (see section~\ref{section:intro} and \cite{Rhodes_1986,Rhodes_1991}. In particular 
if the family of maps has parameter values which have rotation numbers that
are different, then there are parameters with irrational rotation numbers.
Since the image of the cross-section is not surjective because of the
discontinuity this is a natural way to construct a Denjoy counterexample
(a map with an irrational rotation number but no dense orbits).

Although Cherry flows are classic examples from geometric dynamics \cite{Palis_1982}, the
transition from a Poincar\'e flow to a Cherry flow has not been discussed
in the literature. In this section we give a brief account of the
scalings predicted by a theoretical model of this transition and
describe a piecewise smooth example. We will show that
\begin{itemize}
\item the size of the gap is finite at the transition point, so the
transition is discontinuous; and
\item the slope of the map tends to infinity at both boundaries of the jump.
\end{itemize}

A pair of stationary points can be created in a saddle-node bifurcation.
Suppose that $\tilde\mu$ is a real parameter and that if $\tilde\mu >0$
there are no stationary points of the flow, whilst if $\tilde\mu<0$
there is an unstable stationary point and a saddle. Then there are local
coordinates $(\xi ,\eta )$ such that in a neighbourhood
of $(\xi, \eta ,\tilde\mu )=(0,0,0)$ the leading order terms of the
differential equation are
\begin{equation}\label{eq:locsn}\begin{array}{rl}
\dot\xi & = \mu +\xi^2\\
\dot\eta & = \lambda \eta\end{array}
\end{equation}
with $\lambda>0$ and $\mu$ is a rescaled version of the original
parameter $\tilde\mu$. We would like to derive a leading order return
map through a neighbourhood of the origin from $\xi<0$ to $\xi >0$
which can then be composed with the standard return maps for Poincar\'e
type flows away from this singularity to obtain a theoretical model
of the global return map for the transition from a Poincar\'e flow to a Cherry flow.

In the classic form (\ref{eq:locsn}), the unstable manifolds of the
stationary points are vertical and so it is not possible to define
a return map from positive to negative $\xi$. This suggests that
for this problem we should add a further change of coordinates
\begin{equation}\label{eq:coords}
x=\xi+a\eta^2 , \quad y=\eta, \quad (a>0)
\end{equation}
so that in these new coordinates the unstable manifold of the saddle-node
stationary point at $\mu=0$ is $x=ay^2$, $\xi =0$ in (\ref{eq:coords}),
making a return map from negative $x$ to positive $x$ possible even
if $\mu <0$. Another way of seeing this is to claim that generically
the unstable manifolds will be quadratic at the saddle-node bifurcation,
and this modification of coordinates has the effect of making the
unstable manifolds quadratic without complicating the underlying dynamics.

In the new coordinates (\ref{eq:locsn}) becomes
\begin{equation}\label{eq:locsna}\begin{array}{rl}
\dot x & = \mu +x^2+2a\lambda y^2-2axy^2+a^2y^4\\
\dot y & = \lambda y.\end{array}
\end{equation}
As with (\ref{eq:locsn}), (\ref{eq:locsna}) is the leading order
approximation of the vector field in a neighbourhood ${\mathcal N}$
of the origin in phase space and parameter space,
i.e. $|x|^2+|y|^2+|\mu |^2<\epsilon^2$ for some small $\epsilon >0$.

Fix $\epsilon >0$ and ${\mathcal N}$ as above and let $k$ be a
constant, $0<k<1$ to be determined. Our goal is to derive a
return map of (\ref{eq:locsna}) from $x=-k\epsilon$ to $x=k\epsilon$ in ${\mathcal N}$.

If the initial condition is $(-k\epsilon, y_0)$ in ${\mathcal N}$
then this corresponds to $(\xi_0 ,\eta_0 )= (-k\epsilon -ay_0^2, y_0)$
and since $a>0$, $\xi_0<0$.

Suppose that $\mu >0$, so we can write
\begin{equation}\label{eq:posmu}
\mu=\sigma^2, \quad \sigma >0 .
\end{equation}
 Solutions to (\ref{eq:locsn}) are
\begin{equation}\label{eq:sl}
\xi =\sigma\tan (\sigma t+C), \quad \eta =\eta_0\exp(\lambda t),
\end{equation}
or
\begin{equation}\label{eq:sol}
x =\sigma\tan (\sigma t+C)+ay_0^2\exp(2\lambda t), \quad  y=y_0\exp(\lambda t).
\end{equation}

The initial condition $(-k\epsilon, y_0)$ implies that
\[
\sigma\tan C+ay_0^2=-k\epsilon
\]
and so as $\sigma \to 0$
\begin{equation}\label{eq:C}
C=-\frac{\pi}{2}+\frac{\sigma}{k\epsilon +ay_0^2} +O(\sigma^{2}).
\end{equation}
Provided it stays in ${\mathcal N}$ this solution intersects $x=k\epsilon$
after time $T$ given by
 \begin{equation}\label{eq:bal}
\sigma\tan (\sigma T+C)+ay_0^2\exp(2\lambda T)=k\epsilon .
\end{equation}
If $|y_0|$ is sufficiently small so that the first term on the left
hand side  of (\ref{eq:bal}) dominates the second term this leaves
\[
\sigma\tan (\sigma T+C)\approx k\epsilon ,
\]
so
\begin{equation}\label{eq:T}
T\approx \frac{\pi}{\sigma} +O(1),
\end{equation}
and $T\to \infty$ as $\sigma \downarrow 0$.

This approximation holds in an exponentially small region of parameter space with
\[
|y_0|\ll \epsilon\exp (-\lambda T).
\]
However, in this very small neighbourhood of $y_0=0$ the return map
through the region where the saddle-node bifurcation is about to take place is approximately
\[
y\to \exp(\lambda T)y\approx (\textrm{e}^{\lambda\pi})^{\frac{1}{\sigma}}y .
\]
In other words there is a small neighbourhood on which the
slope $s$ of the map is very steep and for $\mu\to 0^+$ tends to infinity with
\begin{equation}
\label{eq:slopscal}
\log s \approx \frac{\lambda\pi}{\sigma}=\frac{\lambda\pi}{\sqrt{\mu}} .
\end{equation}
Note that the constant $\lambda\pi$ is determined by the normal form (\ref{eq:locsn}), and in general $\log s\approx \kappa /\sqrt{\mu}$ for some constant $\kappa$.

Now suppose that $\mu<0$, so
\begin{equation}\label{eq:negmu}
\mu =-\sigma^2, \quad \sigma \ge 0.
\end{equation}
The derivation of the return map is more standard in this case. By construction
there are stationary points at $(\pm \sigma , 0)$ and $(-\sigma ,0)$ is a
saddle. The unstable manifold of the saddle in $(x,y)$ coordinates is the curve
\[
x=-\sigma +ay^2
\]
and so this intersects
$x=k\epsilon$ at $y=\pm\frac{1}{a}\sqrt{k\epsilon+\sigma }$.
Most importantly, this is non-zero for all $\sigma \ge 0$. In other
words the map develops a non-zero discontinuity at the bifurcation point $\mu=0$.
Moreover, standard analysis (e.g.\ \cite{Glendinning_1994}) close to the saddle shows that the slope of the
return map at the discontinuity tends to infinity as the leading order
non-constant term of the return map is
\[
C_1|y|^\alpha, \quad \alpha =2\sqrt{|\mu |}/\lambda <1.
\]

To summarise the theoretical predictions we have
\begin{itemize}
\item if $\mu >0$ then as $\mu \downarrow 0$ the global return map
develops an exponentially small region on which the slope $s$ of
the return map grows large and scales with $\log s\approx \frac{\kappa}{\sqrt{\mu}}$ for some constant $\kappa$;
\item if $\mu\le 0$ then there is a finite discontinuity and if $\mu <0$
then the slope of the map tends to infinity at the discontinuity.
\end{itemize}

The system (\ref{eq:locsna}) can be embedded in a global flow to
create a piecewise smooth example of the transition from a Poincar\'e
flow to a Cherry flow that can be analyzed numerically. Note that there
is no reason why a $C^\infty$ interpolating function could not be used to
smooth out the discontinuities in the defining flow, but this would not
add significantly to the discussion here.

We will define three vector fields and then show that they can be used in
different regions of the phase space ${\mathbb T}^2=[0,1]^2$ to define
a continuous flow on the torus with the desired properties. In
\[ A =\{(x,y)~|~\textstyle{\frac{3}{8}}<x<\textstyle{\frac{5}{8}},~\textstyle{\frac{3}{8}}<y<\textstyle{\frac{5}{8}}\}
\]
we use the saddle-node bifurcation (\ref{eq:locsna}) transformed to the
centre of the square:
\begin{equation}\label{Um}
U_\mu (x,y)=\left(\begin{array}{c}\mu +(x-\textstyle{\frac{1}{2}})^2+2a(\lambda -(x-\textstyle{\frac{1}{2}}))(y-\textstyle{\frac{1}{2}})^2+a^2(y-\textstyle{\frac{1}{2}})^4\\
 \lambda (y-\textstyle{\frac{1}{2}}).\end{array}\right)\end{equation}
Below the square $A$, in
\[ B =\{(x,y)~|~\textstyle{\frac{3}{8}}<x<\textstyle{\frac{5}{8}},~0<y<\textstyle{\frac{3}{8}}\}
\]
define
\begin{equation}\label{V}
V(x,y)=\left(\begin{array}{c}1 \\
b-y \end{array}\right), \quad 0<b<\textstyle{\frac{3}{8}}.\end{equation}
Finally, in the remainder of the torus
\[
C=[0,1]^2\backslash (A\cup B)
\]
define
\begin{equation}\label{W}
W(x,y)=\left(\begin{array}{c}1 \\
c\end{array}\right), \quad c>0.\end{equation}

The constants in the equations now need to be restricted so that trajectories 
cross the boundaries between the regions in the same direction, which
implies that solutions can be continuously extended across these
boundaries (technically this implies that the system has unique Carath\'eodory 
solutions for all initial conditions \cite{Filippov}).

Consider first the boundary between regions $B$ and $C$. This is two
vertical line segments and one horizontal line segment. On the vertical
lines $\dot x =1$ in both (\ref{V}) and (\ref{W}), so both flows are
transverse to these surfaces in the same direction. On the horizontal line $y=0$, so
$\dot y = c$ from below using (\ref{W}) and $\dot y=b$ from above using (\ref{V}),
so again the flow is transverse to the boundary and in the same direction as $b,c >0$.

There is only one boundary between regions $A$ and $B$: the line
segment with $y=\frac{3}{8}$ and $\textstyle{\frac{3}{8}}<x<\textstyle{\frac{5}{8}}$.
If $y=\frac{3}{8}$ then the flow in $A$ has $\dot y=-\frac{1}{8}\lambda<0$
whilst the flow in $B$ has $\dot y=b-\frac{3}{8}$, so if
\begin{equation}\label{eq:bcond}
0<b<\textstyle{\frac{3}{8}}
\end{equation}
then $\dot y<0$ from below, and hence once again the flow is transverse to the 
boundary line segment and crosses it in the same direction from each side.

The horizontal boundary between $A$ and $C$ has $\dot y >0$ on both sides,
but the vertical boundaries require further constraints. On these
boundaries $|y-\textstyle{\frac{1}{2}}|=\frac{1}{8}$ and so
if  $z=x-\frac{1}{2}$ then from (\ref{Um}) on these boundaries approached from $A$
\[
\dot x = \mu+z^2+\textstyle{\frac{a}{32}}(\lambda -z) + (\textstyle{\frac{a}{64}})^2
\]
and so $\dot x \ge \mu+z^2$ if $\lambda >z$. In $A$, $z<\frac{1}{8}$
and so if
\begin{equation}\label{eq:ACcond}
\lambda > \textstyle{\frac{1}{8}}\quad \textrm{and} \quad \mu >-\frac{1}{64} .
\end{equation}
then $\dot x> 0$  as either
boundary is approached from within $A$, $\dot x=1$ in $C$, so the
consistent transversality condition is satisfied, and solutions pass transversely
across these boundaries too.

Fig.~\ref{fig:Cherry} illustrates the flow and return maps associated
with this model as $\mu$ passes through zero. The parameters used are
\begin{equation}\label{eq:cherrypars}
\lambda =1, \quad a=45, b=0.66\times \frac{3}{8}, \quad c=0.25, \quad \mu=\pm\frac{1}{70}.
\end{equation}
With these parameters we expect a gap of
size $2(\frac{1}{8\times 45})\approx  0.00555$ to open up as $\mu$
decreases through zero and increase in seize as $\mu$ decreases.

\begin{figure}
\centering{\includegraphics[scale=1.1]{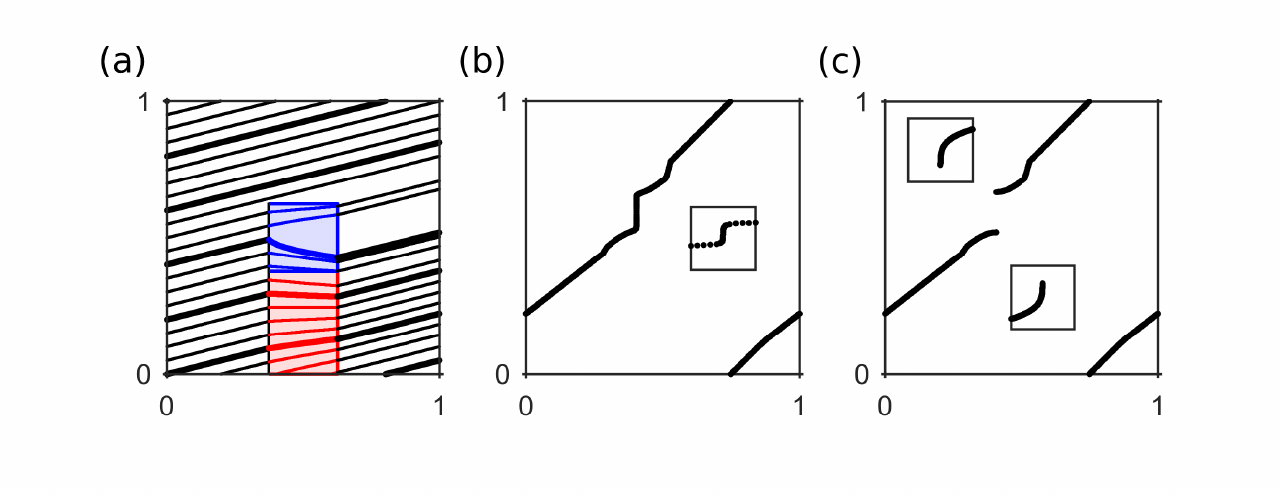}}
\caption{The piecewise smooth model of the transition to a Cherry
flow with parameters (\ref{eq:cherrypars}). (a) Flow with $\mu =\frac{1}{70}$ 
showing a stable solution that winds many times around the torus;
(b) return map on $x=0$ for $\mu=\frac{1}{70}$
with enlargement around the region with high derivative
($x_n\in[0.406245, 0.406255],
x_{n+1}\in[0.45, 0.7])$)
; and
(c) return map on $x=0$ for $\mu=-\frac{1}{70}$
with enlargements around each end of the discontinuity
($x_n\in[0.4061, 0.4064]$ with $x_{n+1}\in[0.6674, 0.6677]$
for the upper end, and $x_{n+1}\in[0.5198, 0.5201])$
for the lower end.)
}
\label{fig:Cherry}
\end{figure}

Figs~\ref{fig:Cherry}(a-b) show the flow and the associated
return map with $\mu=\frac{1}{70}$, i.e. when the flow is still a
Poincar\'e flow.
The return map clearly displays the very steep derivative over a
  significant region in the $x_{n+1}$ variable, but a very small
  region in the $x_n$ variable.
The inset shows a blow-up the map in a segment of $x_n$-values
containing the region of high derivative. This emphasises just how narrow the 
regions involved become and illustrates the rapid transition
  between shallow and steep derivatives.

Figs~\ref{fig:Cherry}(c) shows 
the associated return map with $\mu=-\frac{1}{70}$, at which the flow
is a Cherry flow.
For most $x_n$ values, the return map is almost identical, except
  that the steep curve has been replaced by a gap. The insets show a
  neighbourhood of the points of discontinuity. This reveals
the infinite slope at the point of discontinuity.

\section{Conclusion}
In this paper our focus has been to understand how structural
transitions occur in maps derived from fundamental 
models. We have considered transitions from continuity to
discontinuity, monotonicity to nonmonotonicity and the creation of
multiple gaps, and have described how these transitions can alter the bifurcations
and dynamics of circle maps. Understanding how these structural
transitions occur and their consequences suggests some new phenomena
and gives a wider context within which to interpret some of the
existing literature.

For example, in the study of maps with gaps much of the focus has
been on gaps where the derivatives at either side of the gap are
bounded.  Applications of such maps included threshold maps with
non-smooth thresholds, like the combs in~\cite{Glendinning_1995} or
the triangles and rectangles in~\cite{Arnold_1991}. However, these
non-smooth thresholds were introduced as approximations of smooth
thresholds to allow for explicit calculations and there are other
applications (e.g.\ impact oscillators) in which
finite derivatives do not occur \cite{Avrutin_2010,BBCK}. Generically,
we have shown that both in threshold systems and in the creation of
Cherry flows one expects the associated discontinuous circle map to
have a singularity in the derivative to one (threshold models) or both
(Cherry flows) sides of the gap.  In the case of threshold systems, it
is the contact between the up/down flow and the upper/lower thresholds
that is important in determining the local behaviour: if the contact
is at a tangency, then generically a gap and the square root
singularity results. At the first tangency in a family of such maps
the size of the gap increases continuously from zero for the threshold
models, but for Cherry flow there is a discontinuous jump to a finite
gap at the transition point.

We have shown that the natural consequence of the square root
singularity is that one expects to see sequences of border collisions
(types I and II of section~\ref{sect:squareroot}) 
interspersed with saddle-node bifurcations. Using
a specific example threshold model we have illustrated how the Arnold
tongue bifurcation set for continuous monotonic circle maps with
periodic solutions created and destroyed by saddle-node bifurcations
transitions to a bifurcation set where periodic solutions can in
addition be created/destroyed by border collisions.  This underlying
structure underpins the bifurcation sets found numerically by Glass et
al~\cite{Glass_1991,Glass_1979} and the recent work on the two
process model for sleep-wake regulation \cite{Bailey_2018}.

The transition from no gaps to gaps in piecewise smooth monotonic maps
also has an important consequence for non-periodic solutions. With no
gaps, non-periodic solutions are quasiperiodic and typically they are dense in the
circle. With gaps, solutions tend to a Cantor set
\cite{Rhodes_1986}. Once noted, this difference is readily
observable in numerically computed bifurcation diagrams, as
illustrated in Fig.~\ref{fig:bifdiagnogapandgap}(b) and (c).

For threshold systems, we have identified that the transition to
nonmonotonicity is also the result of tangency, this time of the
up/down flow with the lower/upper threshold. The presence of both gaps
and nonmonotonicity gives many different new possibilities. For
example, we have shown that there is a natural transition from circle
maps with a single gap to multiple gaps.  This in turn leads to a
novel codimension two point in which there is the coincidence of two
border collisions.  A provisional analysis of this
codimension two point shows how it acts as a local organising centre, out
of which an infinite sequence of other border collisions emerge (cf. \cite{Granados_2017}); details will be published elsewhere.

Even the simple example model that we have chosen to illustrate many
of our results, the STS, has extremely rich dynamics which we have not
classified exhaustively and which will be the subject of future
work.  Our aim has been more to understand the structure
of some specific novel generic situations and provide an overall
framework.

\section*{Acknowledgements}
The authors would like to thank the intensive programme on Advances in
Nonsmooth Dynamics at the Centre de Recerca Matem\'{a}tica in Spring
2016. Discussions at this programme gave an important impetus to this work.
ACS would like to thank Leon Glass for interesting discussions on the
background of the STS model at the Biological Oscillator meeting at
the European Molecular Biology Laboratory in 2018.

\section*{References}

\bibliographystyle{siamplain}
\bibliography{sleep,maps}

\end{document}